\def\subjclass#1{{\renewcommand{\thefootnote}{}%
\footnote{\emph{Mathematics Subject Classification (2020):} #1}}}
\def\keywords#1{{\renewcommand{\thefootnote}{}%
\footnote{\emph{Keywords:} #1}}}
\def\ackn#1{{\renewcommand{\thefootnote}{}%
\footnote{#1}}}
\newtheorem{thm}{Theorem}[section]
\newtheorem{cor}[thm]{Corollary}
\newtheorem{lem}[thm]{Lemma}
\newtheorem{prop}[thm]{Proposition}
\newtheorem{conj}[thm]{Conjecture}
\newtheorem{question}[thm]{Question}
\newtheorem{claim}{Claim}[thm]
\theoremstyle{definition}
\newtheorem{defin}[thm]{Definition}
\numberwithin{equation}{section}
\newcommand{\N}{\mathbb{N}}
\newcommand{\R}{\mathbb{R}}
\newcommand{\alaligne}{~\vspace*{\topsep}\nobreak\@afterheading}
\begin{document}

\title{The ergodicity of Orlicz sequence spaces}

\author{No\'e de Rancourt \and Ond\v{r}ej Kurka}

\date{}

\maketitle

\begin{abstract}
We prove that non-Hilbertian separable Orlicz sequence spaces are ergodic, i.e., the equivalence relation $\mathbb{E}_0$ Borel reduces to the isomorphism relation between subspaces of every such space. This is done by exhibiting non-Hilbertian asymptotically Hilbertian subspaces in those spaces, and appealing to a result by Anisca. In particular, each non-Hilbertian Orlicz sequence space contains continuum many pairwise non-isomorphic subspaces.

As a consequence, we prove that the twisted Hilbert spaces $\ell_2(\phi)$ constructed by Kalton and Peck are either Hilbertian, or ergodic. This applies in particular to the Kalton--Peck space $Z_2$ and all twisted Hilbert spaces generated by complex interpolation between Orlicz sequence spaces.

\keywords{Ergodic Banach space; Orlicz sequence space; asymptotically Hilbertian space; near Hilbert space; Musielak--Orlicz sequence space; twisted Hilbert space; complex interpolation.}
\end{abstract}

\subjclass{Primary: 46B45; Secondary: 46B03, 46B06, 46B20, 03E15, 46M18, 46B70.}

\ackn{N. de Rancourt acknowledges support from the Labex CEMPI (ANR-11-LABX-0007-01) and the CDP C2EMPI, together with the French State under the France-2030 programme, the University of Lille, the Initiative of Excellence of the University of Lille, the European Metropolis of Lille for their funding and support of the R-CDP-24-004-C2EMPI project. O.~Kurka was supported by the Czech Science Foundation, project no.~GA\v{C}R~22-07833K, and by the Academy of Sciences of the Czech Republic (RVO 67985840).}

\setcounter{footnote}{0}

\section{Introduction}

A celebrated result by Komorowski and Tomczak-Jaegermann \cite{ktj} and Gowers \cite{GowersRamsey}, solving Banach's homogeneous space problem, asserts that if a Banach space is isomorphic to all of its infinite-dimensional subspaces, then it must be Hilbertian (where \textit{subspace} means closed vector subspace, and \textit{Hilbertian} means isomorphic to a Hilbert space). It is therefore natural to ask how many infinite-dimensional subspaces a non-Hilbertian separable Banach space can have, up to isomorphism. This question, originally asked by Godefroy, can be made more precise by the classification theory for definable equivalence relations, a branch of descriptive set theory aimed at quantifying the complexity of classification problems in mathematics. This theory classifies definable (in most cases, Borel or analytic) equivalence relations on Polish spaces via the \textit{Borel reducibility quasiorder}, defined by $E \leqslant_B F$ (read ``$E$ Borel reduces to $F$'') iff there is a Borel mapping $ f $ from the domain of $ E $ to the domain of $ F $ such that $ f(x) \mathrel{F} f(y) \Leftrightarrow x\mathrel{E}y $. This quasiorder gives rise to a hierarchy of complexity classes; the complexity class of an equivalence relation gives strictly more information than the number of its equivalence classes. The interested reader can find more information, including precise definitions and motivations, in the book \cite{kanovei}.

An important result by Harrington, Kechris and Louveau \cite{hkl} asserts that there exists a $\leqslant_B$-least Borel equivalence relation above the equality relation on $\R$, namely the equivalence relation $\mathbb{E}_0$ on the Cantor space $2^\N$ defined by $a \mathrel{\mathbb{E}_0} b$ iff the sequences $a$ and $b$ eventually agree. We below recall the definition of ergodic Banach spaces, introduced by Ferenczi and Rosendal in \cite{FerencziRosendalErgodic} (where the reader can find a more formal definition).

\begin{defin}[Ferenczi--Rosendal 2005]
    A separable Banach space $X$ is \textit{ergodic} if $\mathbb{E}_0$ Borel reduces to the isomorphism relation between subspaces of $X$.
\end{defin}

In particular, an ergodic Banach space should have continuum-many pairwise non-isomorphic subspaces, and those subspaces cannot be classified, up to isomorphism, with real numbers as invariants. Let us mention that it easily follows from the definition that a subspace of a non-ergodic space is itself non-ergodic; this fact will often be implicitly used. The following conjecture, generalizing the solution of the homogeneous space problem, was stated in \cite{FerencziRosendalErgodic}.

\begin{conj}[Ferenczi--Rosendal 2005]\label{ConjErgodic}
    Every non-Hilbertian separable Banach space is ergodic.
\end{conj}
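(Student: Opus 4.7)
The plan is to attack the conjecture through a case analysis driven by Gowers' Ramsey-theoretic dichotomies, reducing to a short list of ``pure'' types of non-Hilbertian separable Banach spaces for which ergodicity can be established by tailored arguments. Since ergodicity passes from any subspace to its ambient space (a subspace of a non-ergodic space is non-ergodic), it suffices, given a non-Hilbertian separable $X$, to exhibit an ergodic subspace. Gowers' first dichotomy already splits the problem: $X$ contains either a hereditarily indecomposable (HI) subspace, or a subspace with an unconditional basis. The overall strategy is to dispose of the HI alternative using the rigidity of operators on HI spaces, and to attack the unconditional alternative through further Gowers-type dichotomies, combined with the Komorowski--Tomczak-Jaegermann theorem and Anisca's ergodicity theorem for non-Hilbertian asymptotically Hilbertian spaces.

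In the HI subcase, one would exploit the fact that every bounded operator on an HI space is a strictly singular perturbation of a scalar multiple of the identity: existing results of Ferenczi and Rosendal show that HI spaces have many non-isomorphic subspaces, and the goal is to upgrade this to a Borel reduction from $\mathbb{E}_0$ by coding binary sequences into direct sums of carefully chosen block subspaces, then recovering the code from any isomorphism using the strict singularity constraint. In the unconditional subcase, one applies the minimal/tight dichotomy and the further Gowers dichotomies to reduce to two canonical situations. Tight spaces can be shown to be ergodic via block-basis constructions producing continuum-many non-isomorphic subspaces encoding $\mathbb{E}_0$-classes. For minimal spaces with an unconditional basis, the Komorowski--Tomczak-Jaegermann theorem forces them to be close to Hilbert (type $2-\varepsilon$ and cotype $2+\varepsilon$ for every $\varepsilon>0$), and one aims to extract inside such a space a non-Hilbertian asymptotically Hilbertian subspace, to which Anisca's theorem applies and yields ergodicity.

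The main obstacle, and the reason the conjecture remains open in its full generality, lies in the intermediate zone between the Komorowski--Tomczak-Jaegermann conclusion (near-Hilbert geometry in every subspace) and the hypothesis of Anisca's theorem (asymptotically Hilbertian structure). The present paper overcomes precisely this obstacle in the structured setting of Orlicz sequence spaces, where the modular structure allows one to stabilize the asymptotic behavior and produce asymptotically Hilbertian non-Hilbertian subspaces. A plausible route to the full conjecture is to turn this into a general extraction theorem: given an arbitrary non-Hilbertian separable space with an unconditional basis that is saturated with isomorphs of itself, find a non-Hilbertian asymptotically Hilbertian subspace, perhaps via a stabilization of the asymptotic moduli along a Ramsey-theoretic selection of block subspaces, or via a finite-representability argument extracting $\ell_2$-like asymptotic sets. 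The hard part is that existing local-theory tools bound Euclidean distortions of finite-dimensional pieces but do not assemble them into an asymptotically Hilbertian infinite-dimensional subspace outside of structured classes; any general proof will either need such an assembly theorem, or an entirely new ergodicity criterion bypassing the asymptotically Hilbertian assumption.
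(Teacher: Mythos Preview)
The statement you are addressing is a \emph{conjecture}, not a theorem: the paper explicitly says it is ``still widely open'' and does not claim to prove it. So there is no ``paper's own proof'' to compare against, and your proposal cannot be assessed as a proof of the statement, because it is not one --- you say as much yourself when you write that ``the conjecture remains open in its full generality'' and that ``any general proof will either need such an assembly theorem, or an entirely new ergodicity criterion.'' What you have written is a research programme, not a proof.

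As a programme, your outline is broadly accurate and in line with the state of the art. The Gowers dichotomy reduction is standard; ergodicity of HI spaces and of tight spaces is already established in the literature (Ferenczi--Rosendal, and the tight/minimal dichotomy framework), so those branches do not need new ideas. The genuine gap is exactly the one you isolate: in the minimal, unconditional, near-Hilbert branch, there is no known mechanism that, starting from ``type $p$ and cotype $q$ for all $p<2<q$,'' produces a non-Hilbertian asymptotically Hilbertian subspace in an arbitrary space. The paper's contribution is precisely to carry out this step for Orlicz sequence spaces by exploiting the modular structure (Lemmas~\ref{lem:GenCaseStep1} and~\ref{lem:GenCaseStep2}); that argument does not generalize to arbitrary near-Hilbert spaces with an unconditional basis, and indeed the paper poses the analogous question for symmetric bases and Musielak--Orlicz spaces (Question~\ref{quest:MusielakOrSymmetricContainAsympHilbert}) as open. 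Your suggested ``stabilization of asymptotic moduli along a Ramsey-theoretic selection'' is a reasonable heuristic, but no such selection principle is currently known to yield asymptotically Hilbertian subspaces from near-Hilbert data, and this is the missing idea.
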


This conjecture is still widely open, although much progress has been done, see for instance \cite{FerencziRosendalErgodic, RosendalIncomparable, FerencziMinimal, Anisca, CuellarNearHilbert, CuellarRancourtFerenczi}. A good summary of early results on the conjecture is contained in the survey \cite{FerencziRosendalSurvey}.

A particularly striking result, proved in \cite{CuellarNearHilbert}, is the following. Recall that a Banach space is said to be \textit{near Hilbert} if it has type $p$ and cotype $q$ for all $1 \leqslant p < 2 < q \leqslant \infty$.

\begin{thm}[Cuellar Carrera 2018]\label{Cuellar}
    Non-ergodic separable Banach spaces are near Hilbert.
\end{thm}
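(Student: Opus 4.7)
The plan is to prove the contrapositive: any separable Banach space $X$ that is not near Hilbert must be ergodic. Failure of near-Hilbertness means that $p_X := \sup\{p : X \text{ has type } p\} < 2$, or $q_X := \inf\{q : X \text{ has cotype } q\} > 2$.

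The first step is to produce inside $X$ an infinite-dimensional subspace carrying non-Hilbertian asymptotic $\ell_r$-structure for some $r \neq 2$. By the Maurey--Pisier theorem, $\ell_{p_X}^n$ and $\ell_{q_X}^n$ embed uniformly into $X$; combined with a standard basic sequence extraction and the Brunel--Sucheston construction, this yields a normalized basic sequence $(e_n) \subseteq X$ whose spreading model is equivalent to the unit vector basis of $\ell_r$ (where $r = p_X$ or $r = q_X$), or of $c_0$ if $r = \infty$. Since $r \neq 2$, this spreading model is not equivalent to a Hilbertian basis, giving the closed span $[e_n]$ a rigid, asymptotically non-Hilbertian character.

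The second, harder step is to convert this asymptotic structure into a Borel reduction from $\mathbb{E}_0$ to isomorphism on subspaces of $X$. Following the strategy developed by Ferenczi--Rosendal, Rosendal, and Anisca, one would encode each $a \in 2^\mathbb{N}$ into a block subspace $Y_a \subseteq [e_n]$ whose block sizes (or block norms) vary along $a$, in such a way that any isomorphism $Y_a \simeq Y_b$ must preserve the encoding modulo a finite tail shift, hence forcing $a \mathrel{\mathbb{E}_0} b$. The main obstacle lies precisely here: one must exploit that $r \neq 2$ in order to prevent too many isomorphisms between the $Y_a$'s, since in the Hilbertian case any such encoding would collapse under the homogeneity of $\ell_2$. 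This rigidity typically comes from Gowers-type Ramsey dichotomies for block subspaces, combined with asymptotic type/cotype invariants that detect the $\ell_r$-structure and separate the subspaces $Y_a$ modulo $\mathbb{E}_0$. A secondary technical point is to verify that the parametrization $a \mapsto Y_a$ is Borel, which is routine once the blocks are defined by a recursive choice depending measurably on $a$.
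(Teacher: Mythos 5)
The paper does not prove this theorem; it cites it as \cite{CuellarNearHilbert} and merely describes the method: Cuellar Carrera's argument runs Szankowski's construction (the one used to produce subspaces of $\ell_p$, $p\neq 2$, without the approximation property) in an arbitrary non-near-Hilbert separable space, and then distinguishes the resulting AP-failing subspaces by quantitative invariants to obtain an $\mathbb{E}_0$-reduction. Your proposal is an entirely different route and, as written, it has a genuine gap.

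Concretely, your step 1 is not quite what Maurey--Pisier plus Brunel--Sucheston delivers. Maurey--Pisier gives finite representability of $\ell_{p_X}$ and $\ell_{q_X}$ in $X$, but passing from finite representability to a basic sequence whose spreading model is \emph{equivalent} to the unit vector basis of $\ell_r$ (with $r=p_X$ or $q_X$) is not automatic: the Brunel--Sucheston spreading model of a sequence hitting longer and longer $\ell_r^n$-blocks need not itself be $\ell_r$, and Krivine's theorem only gives block finite representability of some $\ell_p$ inside that spreading model, not equivalence. More importantly, even if you could secure a subspace $[e_n]$ with an $\ell_r$ spreading model for some $r\neq 2$, that property alone is not known to imply ergodicity; a spreading model is a tail-asymptotic condition that is far weaker than the asymptotically Hilbertian/asymptotically $\ell_r$ structure exploited by Anisca or by \cite{CuellarRancourtFerenczi}.

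Your step 2 is where the real content would have to be, and it is entirely gestured at rather than argued. You say ``one would encode each $a\in 2^{\mathbb{N}}$ into a block subspace $Y_a$ whose block sizes vary along $a$'' and that ``any isomorphism $Y_a\simeq Y_b$ must preserve the encoding modulo a finite tail shift,'' but you give no mechanism that actually prevents isomorphisms. This is exactly what Cuellar Carrera's proof supplies, and it supplies it not through spreading models but through Szankowski-type subspaces whose failure of the approximation property is quantified by invariants rigid enough to separate the $Y_a$'s up to $\mathbb{E}_0$. Without an invariant of this kind (or an asymptotic-structure argument in the spirit of Anisca), the encoding collapses: in general nothing stops $Y_a$ and $Y_b$ from being isomorphic for $\mathbb{E}_0$-inequivalent $a,b$. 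So the proposal identifies the correct target (a Borel reduction of $\mathbb{E}_0$) and the correct starting point (type/cotype away from $2$), but leaves the essential rigidity argument missing and does not match the paper's cited proof strategy.
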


It follows in particular that the spaces $c_0$ and $\ell_p$ for $1 \leqslant p < \infty, p \neq 2,$ are ergodic (in the case of $c_0$ and $\ell_p$ for $1 \leqslant p < 2$, this had previously been proved by Ferenczi and Galego \cite{FerencziGalego}). By Theorem \ref{Cuellar}, in order to prove or disprove Conjecture \ref{ConjErgodic}, what remains to be explored is the realm of non-Hilbertian separable near Hilbert spaces. Examples of such spaces include:
\begin{itemize}
    \item some Orlicz sequence spaces $h_M$ (i.e. spaces generalizing the $\ell_p$ spaces, see Section \ref{sec:Def} for a definition), for instance those associated to the functions $M(t) = t^2|\log t|^\alpha$, $\alpha \in \R \setminus \{ 0 \}$;
    \item all non-Hilbertian separable \textit{twisted Hilbert spaces}, i.e. Banach spaces $X$ having a subspace $Y$ such that both $Y$ and $X/Y$ are Hilbertian (see \cite[3.11.4]{CabelloCastillo} for the proof that twisted Hilbert spaces are near Hilbert).
\end{itemize}
In particular, the question of whether the famous Kalton--Peck space $Z_2$, a classical example of a twisted Hilbert space, is ergodic, has been around for some years.

The main result of our paper is the following.

\begin{thm}\label{MainMain}
    For every Orlicz function $M$, the Orlicz sequence space $h_M$ is either Hilbertian, or ergodic.
\end{thm}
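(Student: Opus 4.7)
The natural strategy, signaled by the abstract, is to reduce the problem to an existing theorem. Specifically, Anisca \cite{Anisca} has shown that every non-Hilbertian asymptotically Hilbertian separable Banach space is ergodic, and ergodicity transfers from subspaces to ambient spaces (as noted in the introduction). Therefore, assuming $h_M$ is not Hilbertian, it suffices to exhibit a single subspace $Y \subseteq h_M$ which is asymptotically Hilbertian yet not Hilbertian: Anisca's theorem then yields ergodicity of $Y$, and hence of $h_M$.

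To construct $Y$, I would appeal to the classical Lindenstrauss--Tzafriri structure theory for Orlicz sequence spaces: well-chosen normalized block subspaces of $h_M$ are isomorphic, in a controlled way, to Musielak--Orlicz sequence spaces $h_{(M_n)}$, where each $M_n$ arises (up to equivalence) as a rescaling $t \mapsto M(\lambda_n t)/M(\lambda_n)$ of $M$ for some $\lambda_n \to 0$. The plan is to design the block basis so that the resulting $M_n$'s converge, in the appropriate function-space topology, to the Hilbertian function $t \mapsto t^2$. Such convergence forces the tails $h_{(M_n)_{n \geq k}}$ to be increasingly close to $\ell_2$, which should translate into asymptotic Hilbertianness: for each $n$, a sufficiently large $k$ yields a finite-codimensional block subspace whose $n$-dimensional subspaces are all $(1+\varepsilon)$-Euclidean. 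By Theorem~\ref{Cuellar}, only the near Hilbert case of $h_M$ requires serious work, and in that case the available rescalings of $M$ form a family rich enough around $t^2$ for such a construction to be feasible.

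The key difficulty will be to ensure that the subspace $Y$ thus produced is \emph{not} itself Hilbertian. If $M_n \to t^2$ too uniformly, then $h_{(M_n)} \cong \ell_2$ and $Y$ becomes Hilbertian, collapsing the argument. One must therefore calibrate the convergence very precisely: slow in a controlled global way, so that the full Musielak--Orlicz space fails to be isomorphic to $\ell_2$, yet fast in the tail, so that asymptotic Hilbertianness is preserved. This will rely on a fine analysis of the rescaling dynamics of a non-Hilbertian Orlicz function $M$, in particular on the fact that some rescalings of $M$ stay at a definite distance from $t^2$ and can be inserted into the sequence $(M_n)$ at sparse but controlled positions. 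Producing a Musielak--Orlicz sequence with exactly this competing behavior, and then isomorphically distinguishing the resulting space from $\ell_2$ (perhaps via some finite-dimensional invariant tracking the non-Hilbertian positions), is the technical heart of the argument.
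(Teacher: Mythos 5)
Your overall framework matches the paper's: use Theorem~\ref{Cuellar} to reduce to the case $\alpha_M = \beta_M = 2$, then exhibit a non-Hilbertian asymptotically Hilbertian block subspace and apply Anisca's Theorem~\ref{thm:Anisca}. But the part you acknowledge as the technical heart is where all the real content lives, and the specific ideas you sketch for it would not go through in the general case. You propose using block vectors whose associated Musielak functions $M_n$ are, up to equivalence, pure rescalings $t \mapsto M(\lambda_n t)/M(\lambda_n)$; this corresponds to block vectors constant on their supports and is exactly the special case isolated in Proposition~\ref{prop:special}, which requires the extra hypothesis $\lim_{\lambda \to 0^+} M(\lambda t)/M(\lambda) = t^2$. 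In general, even when $\alpha_M = \beta_M = 2$, no individual rescaling of $M$ need be close to $t^2$ on any range $[\nu,1]$. The paper's key Lemma~\ref{lem:GenCaseStep1} instead builds weighted sums $N(t) = \sum_{r=0}^{2R}\kappa^{|r-R|}\tau^{-2r}M(\tau^r t)$, realized by block vectors with carefully chosen non-constant entries, whose telescoping structure forces $N(\lambda\tau)/N(\lambda) \approx \tau^2$ uniformly in $\lambda \in (0,1]$, and Lemma~\ref{lem:GenCaseStep2} iterates this to get closeness to $t^2$ on $[\nu,1]$.

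Your proposed mechanism for ensuring non-Hilbertianness, namely sparsely inserting rescalings that stay at a definite distance from $t^2$, would create precisely the tension you worry about, and you give no quantitative path for resolving it. The paper dissolves the tension rather than balancing it: the constructed $N_k$ are \emph{simultaneously} close to $t^2$ on $[2^{-k},1]$ (which, by Lemma~\ref{lem:DistToEll2}, is enough to bound $d_{BM}(E, \ell_2^n)$ for every subspace $E$ of dimension $n \leq 2^k$) \emph{and} equivalent to $M$ as Orlicz functions, so $h_{N_k}\cong h_M$ is automatically non-Hilbertian; repeating $N_k$ a suitable number $s(k)$ of times then produces finite-dimensional pieces at Banach--Mazur distance $\geq k$ from Euclidean. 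The missing observation in your plan is exactly that of Lemmas~\ref{lem:DistToEll2} and~\ref{lem:AsympHilb}: it suffices to control $M_i(\lambda t)/M_i(\lambda)$ for $t$ bounded away from $0$, not for all $t$, and this is what lets a single function $N_k$ do both jobs. Without it, asymptotic Hilbertianness and global non-Hilbertianness genuinely would compete, and your calibration program would stall.
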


Note that the ergodicity of some near Hilbert Orlicz sequence spaces, namely spaces $h_M$ for $M(t) = t^2|\log t|^\alpha$ with $\alpha > 2$, had already been proved by Cuellar Carrera (unpublished) before the beginning of our research. To do so, he extended his method for proving Theorem \ref{Cuellar}, which consists in using Szankowski's techniques from \cite{Szankowski} to build sufficiently many pairwise non-isomorphic subspaces failing the approximation property. Cuellar Carrera recently informed us that Torres Guzm\'an and himself managed to extend this proof to the case $\alpha < -2$, and will soon release it in a preprint, along with related results. However, their method cannot be extended to all separable Orlicz sequences spaces since, as was proved by Johnson and Szankowski \cite{JohnsonSzankowski}, some of those spaces fail to have a subspace without the approximation property.

Our method for proving Theorem \ref{MainMain} is completely different. While the case of non-near Hilbert spaces follows from Theorem \ref{Cuellar}, in the near Hilbert case, we build an asymptotically Hilbertian, non-Hilbertian block subspace of $h_M$ (see Section \ref{sec:Def} for a definition of an asymptotically Hilbertian space) and appeal to the following result by Anisca \cite{Anisca}.

\begin{thm}[Anisca]\label{thm:Anisca}
    Every non-Hilbertian, asymptotically Hilbertian separable Banach space is ergodic.
\end{thm}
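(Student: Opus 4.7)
My plan is to combine a Ferenczi--Rosendal style ergodic dichotomy with the asymptotic structure theory of Banach spaces. Let $X$ be a non-Hilbertian, asymptotically Hilbertian separable Banach space. I would first reduce to a block-basis setting: extract a basic sequence $(e_n)$ in $X$, pass to a subspace where the asymptotic Hilbertian constants can be controlled, and invoke a dichotomy of Ferenczi--Rosendal type — if $X$ is not ergodic, then some block subspace $Y$ satisfies a strong rigidity/homogeneity condition (for instance, any two block subspaces admit mutually isomorphic sub-block-subspaces, or a suitable minimality property holds). The goal is then to derive a contradiction from the coexistence of this homogeneity with the ``near-Hilbert but not Hilbert'' structure.

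To set this up quantitatively, I would use the asymptotic Hilbertian property to produce a block basis $(x_n)$ of $Y$ and a constant $C$ such that, for every $n$, any $n$ consecutive blocks beyond some threshold $k(n)$ span a space $C$-isomorphic to $\ell_2^n$. Since $Y$ is still non-Hilbertian, no global constant can make infinite block sequences uniformly $\ell_2$-equivalent; the failure to be Hilbertian must therefore manifest itself as an unbounded growth of some invariant along the basis (e.g., infinite-sum constants, or modified type/cotype constants measured on sufficiently spread-out blocks). The key technical step is to localize this defect, i.e.\ to attach to each position $n$ a quantity $\delta_n$ that records how much a particular block centered near $x_n$ deviates from Hilbertian behavior, in a manner that is preserved (up to small errors) by any isomorphism of block subspaces.

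Using such a localized defect profile, I would build a continuum-sized family $(Y_A)_{A\in 2^\N}$ of block subspaces of $Y$, where the choice of $A$ dictates which prescribed ``defective'' block patterns are inserted into $Y_A$ at scales growing with $n$. The construction should be carried out so that the assignment $A \mapsto Y_A$ is Borel with respect to a natural Polish coding of subspaces (by block supports and coefficients), and so that any isomorphism $T: Y_A \to Y_B$ is forced, by the asymptotic Hilbertian discipline and the rigidity coming from the homogeneity hypothesis on $Y$, to approximately match the defect patterns of $A$ and $B$. Combined, this would yield $A \mathrel{\mathbb{E}_0} B$ as the only possibility when $Y_A \cong Y_B$, producing a Borel reduction $\mathbb{E}_0 \leqslant_B \cong$ and contradicting the assumption that $X$ is not ergodic.

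The main obstacle is the rigidity step: an arbitrary isomorphism $T : Y_A \to Y_B$ is, a priori, a wild global map, and the only control comes from the fact that on finite-dimensional subblocks it is close to a Hilbert space isomorphism (up to the uniform constant $C$). Translating this local Hilbertian discipline into an approximate matching of the defect positions at infinity is delicate, and is the heart of the argument; it requires carefully choosing the localized invariants $\delta_n$ so that they are both genuine obstructions to isomorphism \emph{and} stable under the near-isometric distortions $T$ can introduce on each scale. This is where the combination of ``asymptotically Hilbertian'' (to tame $T$) and ``non-Hilbertian'' (to give $T$ something nontrivial to preserve) becomes essential, and is presumably the core of Anisca's original argument.
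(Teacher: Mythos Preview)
The paper does not prove this theorem; it is quoted from Anisca's work \cite{Anisca} and used as a black box. The only information the paper gives about Anisca's argument is the remark immediately following the statement: the reduction of $\mathbb{E}_0$ is built \emph{directly}, using subspaces with an FDD $(F_n)$ whose pieces have finite successive supports when $X$ has a basis. So there is no in-paper proof to compare against, only this hint about the method.

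Your proposal differs in spirit from that hint. Anisca's construction is a direct reduction: one exploits non-Hilbertianity to produce finite-dimensional blocks $F_n$ with $d_{BM}(F_n,\ell_2^{\dim F_n})$ unbounded, and the asymptotic Hilbertian property to produce companion blocks $G_n$ uniformly close to Euclidean; for $A\in 2^\N$ one assembles $X_A$ as an FDD choosing $F_n$ or $G_n$ according to $A(n)$, with dimensions arranged so that any isomorphism $X_A\to X_B$ must eventually match the choices. No dichotomy is invoked.

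Your plan instead opens with a Ferenczi--Rosendal dichotomy (assume non-ergodic, pass to a homogeneous/minimal block subspace $Y$), and then inside $Y$ you still construct a family $(Y_A)$ and argue that $Y_A\cong Y_B$ forces $A\mathrel{\mathbb{E}_0}B$. This is logically coherent as a contradiction scheme, but the dichotomy step buys nothing: homogeneity or minimality produces \emph{more} isomorphisms between block subspaces, not fewer, so it cannot help with the ``rigidity step'' you correctly flag as the crux. If that rigidity step can be carried out, you already have a direct reduction and the dichotomy detour is superfluous; if it cannot, the homogeneity hypothesis does not rescue it. The substantive content of your sketch thus collapses to Anisca's direct approach, wrapped in an unnecessary layer, with the central difficulty (turning the localized ``defect profile'' into a genuine isomorphism invariant that survives an arbitrary bounded operator) acknowledged rather than resolved.
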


Note that, to build a Borel reduction of $\mathbb{E}_0$ to the isomorphism relation between subspaces of a non-Hilbertian, asymptotically Hilbertian separable Banach space $X$, Anisca only uses subspaces having an FDD $(F_n)$ (where, if $X$ comes with a basis, the $F_n$'s can even be chosen to have finite and successive supports). Thus, unlike Cuellar Carrera, to prove Theorem \ref{MainMain} in the case of near Hilbert spaces, we only use subspaces with the approximation property.

It would be interesting to know whether our methods can be extended to prove the ergodicity of more general classes of spaces than Orlicz sequences spaces, for instance spaces with a symmetric basis,
or Musielak--Orlicz sequence spaces (see Section \ref{sec:PartCase} for a definition). For this, one would need an analogue of Theorem \ref{thm:main} for those spaces; we spell out a possible statement as a question below.

\begin{question}\label{quest:MusielakOrSymmetricContainAsympHilbert}
    Let $X$ be either a Banach space with a symmetric basis, or a Musielak--Orlicz sequence space.
    Suppose that $X$ is non-Hilbertian and near Hilbert. Does $X$ necessarily contain a non-Hilbertian, asymptotically Hilbertian subspace? 
\end{question}

If the answer to Question \ref{quest:MusielakOrSymmetricContainAsympHilbert} happens to be negative, a way to prove ergodicity of an $X$ as above would be to find a non-Hilbertian subspace $Y$ of $X$ that does not contain any $d_2$-minimal subspaces; this condition is weaker than $Y$ being asymptotically Hilbertian, by \cite[Corollary 5.24]{CuellarRancourtFerenczi}. Recall that the notion of $d_2$-minimal spaces has been defined in \cite{CuellarRancourtFerenczi}, where it has been proved that separable, non-Hilbertian Banach spaces without $d_2$-minimal subspaces must be ergodic (Theorem 6.5). The peculiar local properties of $d_2$-minimal Banach spaces (see \cite[Lemma 5.22]{CuellarRancourtFerenczi}) make us think that methods similar to those developed in Sections \ref{sec:PartCase} and \ref{sec:ProofMain} of the present paper could still be helpful to answer such a weaker version of Question \ref{quest:MusielakOrSymmetricContainAsympHilbert}.

From Theorem \ref{MainMain}, we draw some consequences about twisted Hilbert spaces. An important part of their theory was developed by Kalton and Peck in the seminal paper \cite{KaltonPeck}, where a family of examples, the spaces $\ell_2(\phi)$, are built (the definition of spaces $\ell_2(\phi)$ will be recalled in Section \ref{sec:Twisted}; here, $\phi \colon \R \to \R$ is a Lipschitz function). We prove the following result.

\begin{thm}\label{ErgTwisted}
    For every Lipschitz function $\phi \colon \R \to \R$, the space $\ell_2(\phi)$ is either Hilbertian, or ergodic.
\end{thm}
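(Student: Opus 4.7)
The plan is to derive Theorem \ref{ErgTwisted} from Theorem \ref{MainMain} by exhibiting, inside any non-Hilbertian $\ell_2(\phi)$, a subspace isomorphic to a non-Hilbertian Orlicz sequence space $h_M$. Since a subspace of a non-ergodic space is itself non-ergodic, the contrapositive gives that ergodicity passes to superspaces: any Borel reduction of $\mathbb{E}_0$ to $\cong$ on subspaces of $h_M$ is simultaneously one for subspaces of $\ell_2(\phi)$. Hence, once such a non-Hilbertian $h_M \subseteq \ell_2(\phi)$ is located, Theorem \ref{MainMain} yields that $h_M$ is ergodic and therefore so is $\ell_2(\phi)$. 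The Hilbertian case of the dichotomy is of course trivial, so the whole proof reduces to producing this Orlicz subspace.

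To this end, I would use Kalton and Peck's realisation of $\ell_2(\phi)$ as a twisted sum $\ell_2 \oplus_{\Omega_\phi} \ell_2$, with associated centralizer $\Omega_\phi(x)_n = x_n\,\phi\bigl(\log(\|x\|_2/|x_n|)\bigr)$ for $x_n \neq 0$. Consider the closed subspace $Y$ spanned by the ``lower'' basic vectors $v_n = (0, e_n)$. A block $\sum_n a_n v_n = (0, x)$ with $x = \sum_n a_n e_n$ has norm equivalent to $\|x\|_2 + \|\Omega_\phi(x)\|_2$, and the sequence $(v_n)_n$ is unconditional and symmetric. Evaluating this norm on equimeasurable vectors of length $N$ and amplitude $t$ and invoking the standard identification of a symmetric sequence space with an Orlicz space, one obtains an isomorphism between $Y$ and $h_M$ for an Orlicz function $M$ satisfying
\[
M(t) \asymp t^2 \bigl(1 + |\phi(\log(1/t))|\bigr)^2 \qquad \text{as } t \to 0^+.
\]

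The main obstacle is to argue that when $\ell_2(\phi)$ is non-Hilbertian, the function $M$ obtained above is not equivalent to $t^2$ near $0$, so that $h_M \not\cong \ell_2$. By the classical analysis of Kalton and Peck, $\ell_2(\phi)$ is Hilbertian precisely when $\Omega_\phi$ is a bounded perturbation of a linear map; using the Lipschitz hypothesis on $\phi$, which makes $\Omega_\phi$ a well-behaved centralizer on $\ell_2$, this condition reduces to $\phi$ being bounded on $[0,\infty)$ (the only range of its argument that occurs, since $|x_n| \leqslant \|x\|_2$). Non-Hilbertianity of $\ell_2(\phi)$ therefore forces $\phi$ to be unbounded on $[0,\infty)$, which in turn forces $M(t)/t^2 \to \infty$ along some sequence $t_k \to 0^+$, i.e.\ $M$ is not equivalent to $t^2$ at $0$. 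Combining this equivalence with the Orlicz subspace construction and Theorem \ref{MainMain} yields the desired dichotomy.
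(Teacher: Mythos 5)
Your overall architecture is the same as the paper's: locate a non-Hilbertian Orlicz sequence space $h_M$ inside $\ell_2(\phi)$ (as the closed span of a sequence of pairs built from the unit vectors $e_n$), reduce via ``ergodicity passes to superspaces'' to Theorem \ref{MainMain}, and dispose of the Hilbertian case by observing that $\phi$ bounded on $\R_+$ forces $\Omega_\phi$ to be bounded, hence $\ell_2(\phi)\cong\ell_2\oplus\ell_2$. (Your vectors $(0,e_n)$ differ from the paper's $w_n=(\Omega_\phi(e_n),e_n)=(\phi(0)e_n,e_n)$ only by a bounded linear perturbation, so they span an isomorphic subspace; that discrepancy is harmless.) However, the proposal leaves two genuine gaps exactly where the paper has to work.

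First, the existence of an \emph{Orlicz} function $M$ with $M(t)\asymp t^2\bigl(1+\phi(-\log t)^2\bigr)$ is not automatic and is not a formal consequence of $\phi$ being Lipschitz: the function $t\mapsto t^2\bigl(1+\phi(-\log t)^2\bigr)$ has no reason to be convex, and it is not obvious that it is equivalent (in the sense of Proposition \ref{Prop:IsomOrlicz}) to a convex function. The paper's Lemma \ref{SuitableOrliczFunction} handles this by replacing $\phi$ with $\psi=\phi*f$ for a smooth bump $f$, so that $\psi-\phi$ and all derivatives $\psi^{(n)}$ are bounded, and then explicitly computing $M''$ for $M(t)=t^2\bigl(K+\psi(-\log t)^2\bigr)$ to show convexity once $K$ is large enough. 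Your proposal simply asserts that such an $M$ exists.

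Second, and more seriously, there is no ``standard identification of a symmetric sequence space with an Orlicz space.'' The class of symmetric sequence spaces is strictly larger than the class of Orlicz sequence spaces (Lorentz spaces $d(w,p)$ are the classical counterexamples), and even when the span of a symmetric basic sequence \emph{is} an Orlicz space, one cannot read off the Orlicz function just by evaluating the norm on equimeasurable vectors $t(e_1+\cdots+e_N)$ and then appealing to a general principle. The paper instead proves directly (following and generalizing Kalton--Peck's Lemma 5.3) that $\sum t_nw_n$ converges in $\ell_2(\phi)$ if and only if $\sum t_ne_n$ converges in $h_M$, using the Lipschitz constant of $\phi$ to control the discrepancy between normalizing by $\sigma_N=\|\sum_{n\le N}t_ne_n\|_2$ and by $\sigma=\|\sum t_ne_n\|_2$, and then comparing the two convergence criteria term by term. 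This is the heart of Proposition \ref{TwistedContainsOrlicz} and cannot be dismissed as standard. Until these two steps are carried out, the proposal is an outline rather than a proof.
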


In the special case when $\phi(t) = t$, the space $\ell_2(\phi)$ is the above mentioned Kalton--Peck space. The problem of its ergodicity is immediately solved by Theorem~\ref{ErgTwisted}.

\begin{thm}
    The Kalton--Peck space $Z_2$ ergodic.
\end{thm}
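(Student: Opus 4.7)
The plan is a one-line deduction from Theorem \ref{ErgTwisted}. Since $Z_2$ is by definition the space $\ell_2(\phi)$ for $\phi(t) = t$ (as recalled in the paragraph preceding the statement), Theorem~\ref{ErgTwisted} immediately yields the dichotomy that $Z_2$ is either Hilbertian or ergodic. I would then invoke the classical fact, established by Kalton and Peck in \cite{KaltonPeck}, that $Z_2$ is not isomorphic to a Hilbert space: the short exact sequence $0 \to \ell_2 \to Z_2 \to \ell_2 \to 0$ does not split, and more concretely $Z_2$ even fails to be $K$-convex in a way that rules out being Hilbertian. Combining these two facts gives that $Z_2$ is ergodic.

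The only point worth spelling out is \emph{why} $Z_2$ is non-Hilbertian; the rest is mechanical. Since this is a foundational result of \cite{KaltonPeck}, the proof plan is simply to cite it rather than reprove it. Thus the whole argument reduces to: ``apply Theorem~\ref{ErgTwisted} to $\phi(t) = t$, and use that $Z_2$ is known to be non-Hilbertian by \cite{KaltonPeck}''. There is no substantive obstacle at this stage — all the work has been absorbed into Theorem~\ref{ErgTwisted}.
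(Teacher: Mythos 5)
Your proposal is correct and matches the paper's own (very brief) argument: apply Theorem~\ref{ErgTwisted} with $\phi(t)=t$ and rule out the Hilbertian alternative. The only minor stylistic difference is that you cite Kalton--Peck \cite{KaltonPeck} for the non-Hilbertianness of $Z_2$, whereas the paper's machinery already yields this as a byproduct — Theorem~\ref{ergodicityTwisted} shows that for $\phi$ unbounded on $\R_+$ (as $\phi(t)=t$ is), the space $\ell_2(\phi)$ contains a non-Hilbertian subspace, so no external reference is needed; either route is fine.
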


An interesting way to produce twisted Hilbert spaces, developed in \cite{CastilloFerencziGonzalez}, is through the use of complex interpolation. As will be explained in Section \ref{sec:Twisted}, the family of spaces $\ell_2(\phi)$ contains all twisted Hilbert spaces generated by complex interpolation between Orlicz sequence spaces, so those spaces are ergodic by Theorem \ref{ErgTwisted}. Finally, let us mention that the following question is still open.

\begin{question}\label{quest:ErgodicityTwisted}
    Is every non-Hilbertian separable twisted Hilbert space ergodic?
\end{question}

This paper is organized as follows. In Section \ref{sec:Def}, most basic definitions and known results used in the paper, in particular those about Orlicz sequence spaces, will be introduced. In Section \ref{sec:PartCase}, our method will be introduced by proving a particular case of Theorem \ref{MainMain}, namely this of spaces $h_M$ where for all $t \in (0, 1]$, $\lim_{\lambda \to 0^+} M(\lambda t)/M(\lambda) = t^2$. This condition is in particular satisfied when $M(t) = t^2|\log t|^\alpha$, $\alpha \in \R$. In Section \ref{sec:ProofMain}, Theorem \ref{MainMain} is proved in full generality. Finally, in Section \ref{sec:Twisted}, consequences of Theorem \ref{MainMain} for twisted Hilbert spaces are obtained.

\section{Notation, definitions and known results}\label{sec:Def}

In this work, most of the results are valid in both real and complex settings. By $ \mathbb{K} $ we denote the scalar field, where we allow both possibilities $ \mathbb{K} = \mathbb{R} $ and $ \mathbb{K} = \mathbb{C} $. Moreover, we define $ \mathbb{R}_{+} = [0, \infty) $.

The unit sphere of a Banach space $X$ will be denoted by $S_X$. Unless otherwise specified, the norms on all Banach spaces will be denoted by $\|\cdot\|$; this includes operator norms. If $E$ and $F$ are two Banach spaces of the same finite dimension, we define the \textit{Banach--Mazur distance} between $E$ and $F$ as
$$d_{BM}(E, F) = \inf\left\{\|T\|\cdot \|T^{-1}\| \,\left|\, T \colon E \to F \text{ is an isomorphism}\right.\right\}.$$

For the basic definitions of Orlicz functions and Orlicz sequence spaces, we follow \cite{LindenstraussTzafririI}. In particular, an \textit{Orlicz function} is a convex function $M \colon \mathbb{R}_{+} \to \mathbb{R}_{+}$ satisfying $M(0) = 0$ and $\lim_{t \to + \infty}M(t) = + \infty$ (it follows that $M$ is continuous and nondecreasing). If $M$ is an Orlicz function, then for each $x \colon \mathbb{N} \to \mathbb{K}$, we define
$$\|x\|_M := \inf\left\{\rho > 0 \,\left|\, \sum_{n=1}^{\infty} M\left(\frac{|x(n)|}{\rho}\right) \leqslant 1\right.\right\} .$$
We denote by $\ell_M$ the vector space of all $x \colon \mathbb{N} \to \mathbb{K}$ for which $\|x\|_M < + \infty$. The space $\ell_M$ endowed with the norm $\|\cdot \|_M$ is a Banach space, called the \textit{Orlicz sequence space} associated to $M$.

For $n \in \N$, we let $e_n \in \ell_M$ be the vector such that $e_n(n) = 1$ and $e_n(i) = 0$ for $i \neq n$. We denote by $h_M$ the closed linear span of the family $(e_n)_{n \in \N}$. As shown in \cite[Proposition 4.a.2]{LindenstraussTzafririI}, the space $h_M$ can be characterized equivalently as the set of vectors $x \in \ell_M$ such that
$$\sum_{n=1}^{\infty} M\left(\frac{|x(n)|}{\rho}\right) < + \infty$$
for all $\rho > 0$. The family $(e_n)_{n \in \N}$ forms a basis of the space $h_M$, which is symmetric with constant $1$. We will refer to it as the \textit{canonical basis} of $h_M$ in what follows.

The following result is part of \cite[Proposition 4.a.5]{LindenstraussTzafririI}.

\begin{prop}\label{Prop:IsomOrlicz}
Let $M$ and $N$ be two Orlicz functions. The following are equivalent:
\begin{enumerate}[(1)]
    \item there exist constants $C, K > 0$ and $t_0>0$ such that for $0 \leq t \leq t_0$, we have:
    $$C^{-1}M(K^{-1}t) \leqslant N(t) \leqslant CM(Kt);$$
    \item the canonical bases of $h_M$ and $h_N$ are equivalent.
\end{enumerate}
\end{prop}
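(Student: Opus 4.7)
\smallskip

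\noindent\emph{Direction $(1)\Rightarrow(2)$.} For $x=\sum a_n e_n$ finitely supported, let $\rho:=\|x\|_M$, so $\sum M(|a_n|/\rho)\leqslant 1$; in particular each $M(|a_n|/\rho)\leqslant 1$, hence $|a_n|/\rho\leqslant M^{-1}(1)$ for every $n$. Rescaling to $\rho':=\mu\rho$ for a sufficiently large constant $\mu$ (depending only on $C,K,t_0$ and $M^{-1}(1)$) simultaneously brings all $|a_n|/\rho'$ into $[0,t_0]$, where the pointwise inequality (1) applies termwise. The convex subhomogeneity $M(\lambda u)\leqslant \lambda M(u)$ for $\lambda\leqslant 1$ (which follows from $M(0)=0$), combined with $\sum M(|a_n|/\rho)\leqslant 1$, then gives $\sum N(|a_n|/\rho')\leqslant 1$, whence $\|x\|_N\leqslant\mu\|x\|_M$. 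The reverse inequality follows from the lower half of (1) by the symmetric argument. This is exactly the equivalence of the canonical bases.

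\smallskip

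\noindent\emph{Direction $(2)\Rightarrow(1)$.} Assume the canonical bases of $h_M$ and $h_N$ are $D$-equivalent. The key move is to test this on the constant-coefficient vectors $x_n:=\sum_{i=1}^n e_i$: a direct Luxemburg computation gives $\|x_n\|_M=1/M^{-1}(1/n)$, where $M^{-1}(s):=\sup\{u\geqslant 0:M(u)\leqslant s\}$, and similarly for $N$; hence $M^{-1}(1/n)$ and $N^{-1}(1/n)$ are comparable up to factor $D$ for every $n\geqslant 1$. To extend this to all small $s>0$, I would use that $M^{-1}$ is concave with $M^{-1}(0)=0$ (since $M$ is convex with $M(0)=0$), which implies that $s\mapsto M^{-1}(s)/s$ is nonincreasing: this prevents $M^{-1}$ from varying by more than a factor $2$ between the consecutive points $1/(n+1)$ and $1/n$, and the same holds for $N^{-1}$. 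Assembling these estimates yields $M^{-1}(s)\leqslant 2D\cdot N^{-1}(s)$ and symmetrically $N^{-1}(s)\leqslant 2D\cdot M^{-1}(s)$ for all sufficiently small $s$; applying $M$ (resp.\ $N$) to the inequality evaluated at $s=N(t)$ (resp.\ $s=M(t)$) translates these into the two-sided pointwise comparison of $M$ and $N$ near zero requested in (1), with $C=1$ and $K=2D$.

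\smallskip

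\noindent\emph{Expected obstacle.} Both implications are largely bookkeeping; the principal technical care lies in juggling $C$, $K$, $t_0$ and the uniform bound $M^{-1}(1)$ in the scaling argument of the first direction, and in upgrading the integer-indexed comparison $M^{-1}(1/n)\asymp N^{-1}(1/n)$ to a comparison at all small $s$ in the second direction, for which the concavity of the generalized inverse is essential.
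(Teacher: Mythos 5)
The paper does not prove this proposition; it is cited verbatim from Lindenstrauss--Tzafriri (Proposition 4.a.5), so there is no proof in the source to compare against. That said, your argument is correct and is essentially the standard textbook proof. The forward direction correctly exploits $M(\lambda u)\leqslant\lambda M(u)$ for $\lambda\leqslant 1$ (after rescaling so that the coordinates fall in $[0,t_0]$ and choosing $\mu\geqslant\max\{M^{-1}(1)/t_0,\,CK\}$), and the reverse direction correctly uses the test vectors $\sum_{i\leqslant n}e_i$ to get $M^{-1}(1/n)\asymp N^{-1}(1/n)$, then concavity of $M^{-1}$ (which holds since $M$ is convex, nondecreasing, with $M(0)=0$) to interpolate to all small $s$, and finally the substitution $s=N(t)$ resp.\ $s=M(t)$ to recover the pointwise inequality. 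One point worth making explicit: when you write $M(M^{-1}(N(t)))=N(t)$ you are implicitly using that a convex nondecreasing $M$ with $M(0)=0$ is strictly increasing wherever it is positive, so the generalized inverse is an honest inverse on the relevant range; if $N(t)=0$ the inequality $N(t)\leqslant CM(Kt)$ is trivial, and the comparability of $M^{-1}(1/n)$ and $N^{-1}(1/n)$ forces $M$ to be degenerate on a corresponding neighbourhood of $0$, so the lower bound is trivial there as well. With that caveat made explicit your sketch closes up fully.
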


Occasionally, the notation $h_M $ can be used for a nonnegative function $M$ with $M(0) = 0$ that is only defined and convex on some interval $ [0, \varepsilon) $,  $ \varepsilon > 0 $. This is for instance the case when $M(t) = t^2|\log t|^\alpha$, $\alpha \in \R \setminus \{0\}$. In such a case, we can find an Orlicz function $\widetilde{M}$ that coincides with $M$ on a smaller interval $ [0, \delta) $. By Proposition \ref{Prop:IsomOrlicz}, the space $h_{\widetilde{M}}$ as a set, and the equivalence class of its norm, do not depend on the choice of $\widetilde{M}$ satisfying the above property. We will hence denote by $h_M$ the space $h_{\widetilde{M}}$ whose norm is only defined up to equivalence.

Next, we recall an important result for proving Theorem~\ref{MainMain}, namely \cite[Theorem~4.a.9]{LindenstraussTzafririI}, characterizing when an Orlicz sequence space contains $ \ell_{p} $, resp.~$ c_{0} $. For an Orlicz function $ M $, we define
$$ \alpha_{M} = \sup \Big\{ q : \sup_{\lambda, t \in (0, 1]} \frac{M(\lambda t)}{M(\lambda) t^{q}} < \infty \Big\}, $$
$$ \beta_{M} = \inf \Big\{ q : \inf_{\lambda, t \in (0, 1]} \frac{M(\lambda t)}{M(\lambda) t^{q}} > 0 \Big\}. $$
Observe that we always have $\alpha_M \leqslant \beta_M$.

\begin{thm}[\cite{LindenstraussTzafririI}] \label{thm:containmentofellp}
The space $ \ell_{p} $, or $ c_{0} $ if $ p = \infty $, is isomorphic to a subspace of an Orlicz sequence space $ h_{M} $ if and only if $ p \in [\alpha_{M}, \beta_{M}] $.
\end{thm}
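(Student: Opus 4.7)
The plan is to reduce both directions to the standard principle that a Banach space with a symmetric basis contains $\ell_p$ (resp.\ $c_0$) if and only if some normalized block basis of the canonical basis $(e_n)_{n\in\N}$ is equivalent to the unit vector basis of $\ell_p$ (resp.\ $c_0$). Combined with Proposition~\ref{Prop:IsomOrlicz}, this turns the theorem into a computation involving the Orlicz modular $\sum_n M(|\cdot|)$ and the rescaled functions $M_\lambda(t) := M(\lambda t)/M(\lambda)$ for $\lambda \in (0,1]$.

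For the ``only if'' direction, suppose a normalized block basis $u_k = \sum_{n \in A_k} a^{(k)}_n e_n$ is $C$-equivalent to the unit vector basis of $\ell_p$. Normalization gives $\sum_n M(|a^{(k)}_n|) = 1$, and evaluating $\|\sum_k c_k u_k\|_M$ for a variety of scalar sequences $(c_k)$ and comparing with $(\sum_k |c_k|^p)^{1/p}$ yields, after homogenization, inequalities of the shape $c\, t^p \le M(\lambda t)/M(\lambda) \le C\, t^p$ valid on a suitable range of $(\lambda,t) \in (0,1]^2$. Inspecting the definitions of $\alpha_M$ and $\beta_M$ then forces $p \in [\alpha_M,\beta_M]$; the case $p=\infty$ is treated analogously by replacing $\ell_p$-sums with suprema.

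For the ``if'' direction, fix $p \in [\alpha_M,\beta_M]$. Each $M_\lambda$ is convex, nondecreasing, with $M_\lambda(1) = 1$, and the inequalities defining $\alpha_M$ and $\beta_M$ imply that the family $\{M_\lambda : \lambda \in (0,1]\}$ is equibounded on $[0,1]$. Convexity provides equicontinuity on compact subsets of $(0,1]$, so Arzel\`a--Ascoli extracts a subsequence $\lambda_k \to 0^+$ for which $M_{\lambda_k}$ converges locally uniformly to a convex limit $N$. The crucial step, in which the hypothesis $p \in [\alpha_M, \beta_M]$ is used, is to choose this subsequence so that $N$ is equivalent to $t \mapsto t^p$ in the sense of Proposition~\ref{Prop:IsomOrlicz}: indices $q < \alpha_M$ or $q > \beta_M$ are ruled out by the uniform bounds on $M_\lambda/t^q$, while values $p$ inside $[\alpha_M,\beta_M]$ are witnessed by a suitable sequence of scales. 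Once such $N$ is found, a diagonal construction of finitely supported blocks $u_k$, whose nonzero coefficients lie in the range where $M_{\lambda_k}$ uniformly approximates $N$, produces a block basis equivalent to the canonical basis of $h_N$, hence to the $\ell_p$-basis by Proposition~\ref{Prop:IsomOrlicz}.

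The main obstacle is the ``if'' direction, specifically the passage from a single scaling limit $N \sim t^p$ to genuine $\ell_p$-behavior of all finite linear combinations of the $u_k$'s. This requires choosing the supports $A_k$ and the rates $\lambda_k \to 0^+$ with enough separation that the modulars decouple: the contribution of each $u_k$ to $\sum_n M(|\sum_k c_k u_k(n)|/\rho)$ must be controlled essentially independently of the others, turning a family of local scaling approximations $M_{\lambda_k} \approx N$ into a global equivalence of bases.
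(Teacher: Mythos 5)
The paper does not prove this theorem; it is quoted directly from Lindenstrauss--Tzafriri (Theorem 4.a.9), where the argument passes through a companion result (Theorem 4.a.8) formulated in terms of the \emph{closed convex hull} of the rescaled functions $M_\lambda := M(\lambda\,\cdot)/M(\lambda)$, not merely their pointwise limits. Your sketch has the right overall shape, but there is a genuine gap in the ``if'' direction. You propose to realize every $p \in [\alpha_M, \beta_M]$ by extracting a single sequence $\lambda_k \to 0^+$ with $M_{\lambda_k} \to N$ locally uniformly and $N$ equivalent to $t^p$, and then build blocks supported near the single scale $\lambda_k$. For strictly intermediate $p$ this need not be possible. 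If $M$ oscillates between $t^2$-like and $t^3$-like behaviour on widely separated scales, then $\alpha_M = 2$ and $\beta_M = 3$, yet every scaling limit $\lim_k M_{\lambda_k}$ behaves like $t^2$ on part of $(0,1]$ and like $t^3$ on the rest, and in particular like $t^2$ or $t^3$ as $t \to 0^+$; no such limit is equivalent to $t^{2.5}$ in the sense of Proposition~\ref{Prop:IsomOrlicz}, even after rescaling. Arzel\`a--Ascoli gives compactness and connectedness of the set of scaling limits, but this does not supply the intermediate-value statement you are implicitly invoking.

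What actually makes the ``if'' direction work is that a normalized block $u_k$ supported on a finite set $A_k$ produces the Orlicz function $N_k(t) = \sum_{n \in A_k} M(|u_k(n)|\,t)$, which after normalization is a \emph{convex combination} of the rescalings $M_{|u_k(n)|}$. Lindenstrauss--Tzafriri's Theorem 4.a.8 identifies the attainable $N_k$'s, up to equivalence and as the blocks escape to infinity, with $\bigcap_{A>0}\overline{\mathrm{conv}}\{M_\lambda : 0 < \lambda \le A\}$, and their Theorem 4.a.9 then shows that $t^p$ lies (up to equivalence) in this compact convex set exactly when $p \in [\alpha_M, \beta_M]$. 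So for intermediate $p$ the blocks must mix \emph{several} different scales with carefully chosen multiplicities; your construction, which pins each $u_k$ to a single scale, cannot produce this. The ``only if'' direction of your sketch, while vague, is salvageable: any such $N$ inherits the two-sided power bounds defining $\alpha_M$ and $\beta_M$, and these pass to convex combinations, so $N \sim t^p$ forces $\alpha_M \le p \le \beta_M$; alternatively one may argue via the upper $q$-estimate ($q < \alpha_M$) and lower $q$-estimate ($q > \beta_M$) that $h_M$ satisfies on disjointly supported vectors.
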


Before going further, two remarks are in order. First, by \cite[Proposition 4.a.4]{LindenstraussTzafririI}, we have $\ell_M = h_M$ iff $\ell_M$ is separable, iff $M$ satisfies the \textit{$\Delta_2$-condition at $0$}, namely $\limsup_{t\to 0+}M(2t)/M(t)  < \infty$. If this condition does not hold, then it is easily seen that we must have $\beta_M = \infty$, hence $c_0$ embeds into $h_M$. Therefore, for all spaces we are actually interested in, we will have $\ell_M = h_M$; in such a case, it is much more common in the literature to denote this space by $\ell_M$. However, in the present paper, we chose to keep the notation $h_M$ in order to be able to state our results in full generality.

Second, if $\alpha_M = \beta_M = 2$, then by the Remark at the bottom of page 140 in \cite{LindenstraussTzafririII}, $h_M$ satisfies an upper $p$-estimate and a lower $q$-estimate for all $1 \leqslant p < 2 < q \leqslant \infty$. It follows from \cite[Theorem 1.c.16]{LindenstraussTzafririII} that $h_M$ is near Hilbert. Conversely, if $h_M$ is near Hilbert, then it cannot contain a copy of any $\ell_p$, $p \neq 2$, nor of $c_0$, therefore by Theorem \ref{thm:containmentofellp}, one has $\alpha_M = \beta_M = 2$. Although this equivalence will not be formally used in the proof of Theorem \ref{MainMain}, it is good to have it in mind. This is for instance how one can prove that the space $h_M$, where $M(t) = t^2|\log t|^\alpha$, is near Hilbert for all $\alpha \in \R$.

We now recall the definition of asymptotically Hilbertian spaces and its quantitative versions.

\begin{defin}
A Banach space $X$ is said to be:
\begin{itemize}
    \item \textit{$K$-asymptotically Hilbertian}, for $K \geqslant 1$, if for every $n \in \N$, there exists a finite-codimensional subspace $Y \subseteq X$ such that for every $n$-dimensional vector subspace $E \subseteq Y$, we have $d_{BM}(E, \ell_2^n) \leqslant K$;
    \item \textit{$K^+$-asymptotically Hilbertian}, for $K \geqslant 1$, if it is $(K+\varepsilon)$-asymptotically Hilbertian for every $\varepsilon > 0$;
    \item \textit{asymptotically Hilbertian} if it is $K$-asymptotically Hilbertian for some $K \geqslant 1$.
\end{itemize}
\end{defin}

We can now state the main result that will be actually proved in this paper, and from which Theorem \ref{MainMain} will follow.

\begin{thm}\label{thm:main}
    Let $ M $ be an Orlicz function such that $ \alpha_{M} = \beta_{M} = 2 $. If $ h_{M} $ is not Hilbertian, then it contains a non-Hilbertian $ 1^{+} $-asymptotically Hilbertian subspace.
\end{thm}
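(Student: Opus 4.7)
The plan is to construct a block subspace $Y$ of $h_M$ of the form $Y = \overline{\Span\{u_n : n \in \N\}}$ where each $u_n = \lambda_n \mathbf{1}_{A_n}$ is a flat block: here the $A_n \subset \N$ are successive disjoint finite subsets with $|A_n| = N_n$ and $\lambda_n = M^{-1}(1/N_n)$, so that $\|u_n\|_M = 1$. The inherited norm on $Y$ is of Musielak--Orlicz type,
$$
\left\|\sum_n c_n u_n\right\|_Y = \inf\Bigl\{\rho > 0 : \sum_n \phi_{\lambda_n}(|c_n|/\rho) \leq 1\Bigr\},
$$
where $\phi_\lambda(t) = M(\lambda t)/M(\lambda)$, and the freedom in the construction lies in the choice of the sequence $(N_n)$.

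The key analytic input is the control on $\phi_\lambda$ afforded by $\alpha_M = \beta_M = 2$: for every $\varepsilon > 0$ there exist $c_\varepsilon, C_\varepsilon > 0$ with $c_\varepsilon t^{2+\varepsilon} \leq \phi_\lambda(t) \leq C_\varepsilon t^{2-\varepsilon}$ for all $\lambda, t \in (0, 1]$, forcing $\phi_\lambda(t)$ to be quadratic-like. In the particular case treated in Section~\ref{sec:PartCase}, we additionally have $\phi_\lambda(t) \to t^2$ pointwise as $\lambda \to 0$; by Dini-type arguments this upgrades to uniform convergence on each compact subset of $(0, 1]$.

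To prove that $Y$ is $1^+$-asymptotically Hilbertian, I would fix $k \in \N$ and $\varepsilon > 0$ and choose a tail threshold $N$ large enough that $\phi_{\lambda_m}(t)$ lies within a factor $(1+\varepsilon)$ of $t^2$ for all $m \geq N$ and all $t$ in the range of scales relevant to $k$-dimensional subspaces. A local analysis of the Musielak--Orlicz formula then yields $d_{BM}(E, \ell_2^k) \leq 1+\varepsilon$ for every $k$-dimensional subspace $E$ of the tail $\overline{\Span\{u_m : m \geq N\}}$. To prove that $Y$ is not Hilbertian, I would exhibit a sequence of vectors in $Y$, for instance averages of the form $\tfrac{1}{\sqrt K}\sum_{n=1}^K u_n$, whose $Y$-norms diverge from their $\ell_2$-norms as $K \to \infty$. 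This is possible precisely because $h_M$ is not Hilbertian, which prevents $\phi_\lambda$ from approaching $t^2$ with uniform rate.

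The main obstacle is the simultaneous calibration of $(N_n)$: it must grow slowly enough that the cumulative deviations of $\phi_{\lambda_n}$ from $t^2$ prevent $Y$ from being isomorphic to $\ell_2$, yet fast enough that when restricted to $k$ coordinates in a deep tail, the resulting finite-dimensional norm becomes $(1+\varepsilon)$-close to $\ell_2^k$. In the particular case of Section~\ref{sec:PartCase}, the pointwise convergence $\phi_\lambda \to t^2$ makes this balance relatively transparent. In the general setting of Section~\ref{sec:ProofMain}, where $\phi_\lambda$ need not converge pointwise as $\lambda \to 0$, the construction must rely on a more delicate analysis of the cluster set of $\{\phi_\lambda : \lambda \in (0, 1]\}$ in a suitable topology on the space of Orlicz functions.
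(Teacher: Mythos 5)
Your proposal correctly reproduces the paper's strategy for the special case treated in Section~\ref{sec:PartCase} (Proposition~\ref{prop:special}): build a block subspace generated by flat blocks $u_n = \lambda_n \mathbf{1}_{A_n}$, so that the inherited structure is a Musielak--Orlicz sequence space with component functions $M(\lambda_n\,\cdot)/M(\lambda_n)$, and exploit the pointwise convergence $M(\lambda t)/M(\lambda) \to t^2$ (upgraded to uniform on $[\nu,1]$ by Dini) to meet the hypothesis of a finite-dimensional distance estimate like Lemma~\ref{lem:DistToEll2}. The paper does exactly that. You also recognize that the pointwise convergence hypothesis is genuinely stronger than $\alpha_M=\beta_M=2$, and that the general theorem requires a different idea.

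However, for the general case your proposal has a real gap, and the last sentence (``a more delicate analysis of the cluster set of $\{\phi_\lambda\}$'') does not close it. The obstruction is structural: for a flat block $u_n = \lambda_n\mathbf{1}_{A_n}$, the normalized component function is $\phi_{\lambda_n\lambda}(t) = M(\lambda_n\lambda t)/M(\lambda_n\lambda)$, and the criterion for $1^+$-asymptotic Hilbertianity (Lemma~\ref{lem:AsympHilb}) requires $\frac{1}{1+\varepsilon} t^2 \leq M(\mu t)/M(\mu) \leq (1+\varepsilon) t^2$ \emph{uniformly for all} $\mu \in (0,\lambda_n]$ and $t\in[\nu,1]$. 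A cluster-set argument can only select a sequence of scales $\mu_j \to 0$ along which $\phi_{\mu_j}$ converges to some limit $\phi_*$; it gives neither control at all intermediate scales $\mu \in (0,\lambda_n]$, nor any reason for the limit $\phi_*$ to equal $t^2$ rather than some other Orlicz function compatible with $\alpha_M=\beta_M=2$. In short, \emph{no choice of the sequence $(N_n)$ can rescue the flat-block construction in general}; the failure is not a calibration issue but a qualitative one.

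The paper's way around this is to abandon flat blocks. Lemmas~\ref{lem:GenCaseStep1} and~\ref{lem:GenCaseStep2} produce, for each $\varepsilon$ and $\nu$, finitely many scales $\lambda_1,\dots,\lambda_\sigma$ and coefficient multiplicities so that the \emph{averaged} function
$$N(t) = \sum_{j=1}^{\sigma} M(\lambda_j t) = K\sum_{r=0}^{2R}\kappa^{|r-R|}\tau^{-2r}M(\tau^r t)$$
satisfies the almost-multiplicative property $\frac{1}{1+\varepsilon}t^2 \leq N(\lambda t)/N(\lambda) \leq (1+\varepsilon)t^2$ for all $\lambda \in (0,1]$, $t\in[\nu,1]$, even though $M$ itself does not. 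The crucial point in Lemma~\ref{lem:GenCaseStep1} is that the geometric weighting $\kappa^{|r-R|}\tau^{-2r}$ acts as a smoothing kernel: the telescoping identity $\tau^{-2}N(\tau t) - \kappa^{\pm 1}N(t) = \kappa^R\tau^{-2(2R+1)}M(\tau^{2R+1}t) - \kappa^{\pm 1 + R}M(t) + (\text{same-sign middle terms})$ isolates two boundary terms, and the power bounds $c(v/u)^p \leq M(v)/M(u) \leq C(v/u)^q$ with $q<2<p$ — which is all that $\alpha_M=\beta_M=2$ gives — suffice to make the boundary terms negligible once $R$ is large. A block vector in $h_M$ whose nonzero coordinates are a rearrangement of $(\lambda_1,\dots,\lambda_\sigma)$ then has inherited Orlicz function exactly $N$. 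Non-Hilbertianity is preserved because $M(\lambda_1 t) \leq N(t) \leq \sigma M(t)$ puts $h_N$ in the same isomorphism class as $h_M$ by Proposition~\ref{Prop:IsomOrlicz}, so long finite blocks of the canonical basis of $h_N$ stay far from Euclidean. This averaging device is the genuinely new idea the general case requires, and it is the piece missing from your proposal.
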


\begin{proof}[Proof of Theorem \ref{MainMain}]
Let $M$ be an Orlicz function such that $h_M$ is non-Hilbertian. If $h_M$ contains an isomorphic copy of $c_0$ or an $\ell_p$, $p \neq 2$, then it is ergodic by Theorem \ref{Cuellar}. Otherwise, by Theorem \ref{thm:containmentofellp}, we must have $\alpha_M = \beta_M = 2$, so $h_M$ contains an asymptotically Hilbertian, non-Hilbertian subspace, so it is ergodic by Theorem \ref{thm:Anisca}.    
\end{proof}

\section{A special case}\label{sec:PartCase}

The aim of this section is to prove the following proposition, which is a special case of Theorem \ref{thm:main}.

\begin{prop}[the special case] \label{prop:special}
Let $ M $ be an Orlicz function such that
$$ \lim_{\lambda \to 0+} \frac{M(\lambda t)}{M(\lambda)} = t^{2} $$
for every $ t \in (0, 1] $. If $ h_{M} $ is not Hilbertian, then it contains a non-Hilbertian $ 1^{+} $-asymptotically Hilbertian subspace.
\end{prop}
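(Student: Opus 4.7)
The plan is to build the desired subspace as a block subspace $Y = [x_k]_{k \in \N}$ of $h_M$ with respect to the canonical basis, using flat indicator blocks $x_k := \mathbf{1}_{A_k}/\|\mathbf{1}_{A_k}\|_M$ supported on successive finite sets $A_k \subseteq \N$ of sizes $N_k$ tending to infinity at a rate to be determined. Writing $\sigma_k := \|\mathbf{1}_{A_k}\|_M^{-1}$, so that $\sigma_k \to 0^+$, the $1$-unconditionality of the canonical basis identifies $Y$ with a Musielak--Orlicz sequence space whose coordinate functions are $\varphi_k(t) := M(\sigma_k t)/M(\sigma_k)$, and the hypothesis gives $\varphi_k(t) \to t^2$ pointwise on $(0,1]$ as $k \to \infty$.

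For the $1^+$-asymptotic Hilbertianity of $Y$, I would fix $\varepsilon > 0$ and $n \in \N$ and show that for $k_0 = k_0(n,\varepsilon)$ large enough, every $n$-dimensional subspace of the tail $[x_k]_{k \geq k_0}$ is $(1+\varepsilon)$-isomorphic to $\ell_2^n$. The pointwise convergence $\varphi_k(t) \to t^2$ extends to uniform convergence on compact subsets of $(0,1]$ (standard for slowly varying functions), and the upper $p$- and lower $q$-estimates valid in $h_M$ for every $1 < p < 2 < q < \infty$ (automatic since $\alpha_M = \beta_M = 2$, which follows from the hypothesis) control the residual small-$t$ regime where uniform convergence may fail. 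A standard perturbation argument using $1$-unconditionality would then reduce arbitrary $n$-dimensional subspaces to block ones.

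For the non-Hilbertianness of $Y$, the assumption that $h_M$ is non-Hilbertian combined with $\alpha_M = \beta_M = 2$ forces the slowly varying function $L(t) := M(t)/t^2$ to be unbounded as $t \to 0^+$. Using this, I would exhibit vectors $y = \sum_k c_k x_k \in Y$ whose $M$-norm exceeds any prescribed multiple of their $\ell_2$-norm, by choosing coefficients $(c_k)$ whose scales are coupled to those of the blocks in a way that amplifies the deviation recorded by $L$ (a representative candidate being profiles of the form $c_k = \sigma_k^\gamma$ for a parameter $\gamma$).

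The principal obstacle is reconciling these two steps. The most natural non-Hilbertian witnesses, namely coefficient profiles like $c_k = \sigma_k^\gamma$ with $\gamma > 0$, are supported throughout every tail of the basis and so threaten the asymptotic Hilbertianity of those tails. The construction must therefore be carefully tuned --- by choosing the sizes $N_k$ in relation to the oscillation of $L$, and possibly by departing from strict flatness of the blocks --- so that the problematic combinations either fail to be square-summable or produce only bounded, controllable distortion inside any deep tail, while still witnessing non-Hilbertianness of $Y$ globally through head-dominated combinations. Calibrating this trade-off is where I expect the technical heart of the proof to reside.
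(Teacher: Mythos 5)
Your framework---flat blocks, the Musielak--Orlicz identification with coordinate functions $\varphi_k(t) = M(\sigma_k t)/M(\sigma_k)$, and uniform convergence of $\varphi_k$ to $t^2$---matches the paper's, and the obstacle you identify at the end is real. But you have not resolved it, and that is a genuine gap: calibrating a single ``coefficient profile'' $c_k = \sigma_k^\gamma$ against the tails is not the route the paper takes and, as you suspect, it is in direct tension with asymptotic Hilbertianity.

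The paper's resolution is a grouping trick. The non-Hilbertianity witnesses are not coefficient sequences supported throughout the basis but finite-dimensional subspaces of \emph{unbounded dimension}, each spanned by a batch of blocks of the same scale. Concretely: set $\lambda_k = M^{-1}(1/k)$ and $M_k(t) = kM(\lambda_k t)$; by Proposition~\ref{Prop:IsomOrlicz} $h_{M_k}$ is isomorphic to $h_M$, hence non-Hilbertian, so for a large enough $s(k)$ the initial segment $\mathrm{span}\{e_1,\dots,e_{s(k)}\}$ of $h_{M_k}$ has Banach--Mazur distance $\geq k$ to $\ell_2^{s(k)}$. One then takes a block sequence in which, for every $k$, there are exactly $s(k)$ flat blocks of support size $k$ with constant value $\lambda_k$. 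Each such batch spans an isometric copy of that initial segment of $h_{M_k}$, at distance $\geq k$ from Euclidean; as $k$ is unbounded, the resulting block subspace is not Hilbertian. There is no conflict with $1^+$-asymptotic Hilbertianity because the latter quantifies over a \emph{fixed} dimension $n$: the bad subspaces escape along both the index axis and the dimension axis, so after deleting any finite initial part they can only be realized in dimensions exceeding $n$.

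Two secondary remarks. First, your asymptotic-Hilbertianity step is vaguer than what is needed. The paper does not reduce arbitrary $n$-dimensional subspaces to block ones by perturbation; it handles them directly through an Auerbach-basis normalization (Lemma~\ref{lem:DistToEll2}), and the hypothesis it uses is a two-sided bound on $M_i(\lambda t)/M_i(\lambda)$ only for $t \in [\nu,1]$. The small-$t$ regime is absorbed by convexity and the normalization $\sum_i M_i(|x(i)|) = 1$, not by upper $p$- and lower $q$-estimates, which the special-case proof never invokes. Second, your appeal to $\alpha_M = \beta_M = 2$ is an unnecessary detour: the special-case proof works directly from the hypothesis, upgraded to uniform convergence of $M(\lambda t)/M(\lambda)$ to $t^2$ on $[0,1]$ as $\lambda \to 0^+$, the uniformity being automatic because the functions are nondecreasing in $t$ and the limit $t^2$ is continuous.
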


 Although neither Proposition \ref{prop:special} nor its proof at the end of this section are used for proving the general result, the proof may be helpful for understanding and shows what is possible to obtain by considering a block subspace generated by vectors that are constant on their supports. Moreover, Proposition \ref{prop:special} is sufficient for proving the ergodicity of spaces $h_M$ for $M(t) = t^2|\log t|^\alpha$.

Let $ M_{1}, M_{2}, \dots $ be a sequence of Orlicz functions. For $ x : \mathbb{N} \to \mathbb{K} $, we define
$$ \Vert x \Vert_{(M_{i})_{i}} = \inf \Big\{ \rho > 0 : \sum_{i=1}^{\infty} M_{i} \big( \tfrac{|x(i)|}{\rho} \big) \leq 1 \Big\}. $$
We denote by $ \ell_{(M_{i})_{i}} $ the vector space of all sequences $ x $ such that $ \Vert x \Vert_{(M_{i})_{i}} < \infty $. The space $ \ell_{(M_{i})_{i}} $ endowed with the norm $ \Vert \cdot \Vert_{(M_{i})_{i}} $ is called the \textit{Musielak--Orlicz sequence space} associated to $ (M_{i})_{i} $.
Moreover, by $ h_{(M_{i})_{i}} $ we denote the subspace of $ \ell_{(M_{i})_{i}} $ generated by the canonical basic sequence $ (e_{i})_{i} $.

Our purpose of this notation is the following description of block subspaces of $ \ell_{M} $, cf.~with \cite[p.~141]{LindenstraussTzafririI}. If $ x_{1}, x_{2}, \dots $ is a sequence in $ \ell_{M} $ with non-empty finite and pairwise disjoint supports, then it is $ 1 $-equivalent with the canonical basis $ e_{i} $ in $ h_{(M_{i})_{i}} $, where $ M_{i} $ is defined by
$$ M_{i}(t) = \sum_{n=1}^{\infty} M(|x_{i}(n)|t), \quad t \geq 0. $$

\begin{lem} \label{lem:DistToEll2}
Let $ C \geq 1 $ and $ 0 < \nu < 1 $. Let $ M_{1}, M_{2}, \dots $ be Orlicz functions with $ M_{i}(1) \geq 1 $ such that
$$ \frac{1}{C} t^{2} \leq \frac{M_{i}(\lambda t)}{M_{i}(\lambda)} \leq C t^{2}, \quad 0 < \lambda \leq 1, \; \nu \leq t \leq 1. $$
Then every subspace $ E \subseteq \ell_{(M_{i})_{i}} $ of dimension $ n \leq 1/\nu $ satisfies
$$ d_{BM}(E, \ell_{2}^{n}) \leq \sqrt{C(C+\nu^{2})/(1-\nu)}. $$
\end{lem}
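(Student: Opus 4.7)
My plan is to identify a Hilbertian norm on $E$ that is comparable to $\|\cdot\|_{(M_i)_i}$. The natural candidate, suggested directly by the hypothesis, is
\[
\|x\|_H := \Bigl(\sum_{i\geq 1} M_i(1)\,|x(i)|^2\Bigr)^{1/2},
\]
obtained by pretending that each $M_i(t)$ equals $M_i(1)\,t^2$ (the model quadratic that the hypothesis makes $M_i$ equivalent to, up to the constant $C$, on the interval $[\nu,1]$). I aim to prove the two one-sided comparisons
\[
\|x\|_H^2\leq (C+\nu^2)\,\|x\|_{(M_i)_i}^2 \qquad \text{and} \qquad \|x\|_{(M_i)_i}^2\leq \frac{C}{1-\nu}\,\|x\|_H^2,
\]
whose product gives exactly the Banach--Mazur bound $\sqrt{C(C+\nu^2)/(1-\nu)}$, since $\|\cdot\|_H$ is Euclidean.

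Both inequalities proceed via the same dichotomy: split the coordinates of $x$ into ``large'' ones, where $|x(i)|/\|x\|$ (or $|x(i)|/\rho$) lies in $[\nu,1]$, and ``small'' ones below that threshold. On large coordinates the hypothesis supplies the two-sided comparison $\tfrac{1}{C}\,M_i(1)\,|x(i)|^2 \leq M_i(|x(i)|) \leq C\,M_i(1)\,|x(i)|^2$, which gives $\sum_{i\text{ large}} M_i(1)|x(i)|^2 \leq C\|x\|_{(M_i)_i}^2$ for the first inequality, and, after setting $\rho=\sqrt{C/(1-\nu)}\,\|x\|_H$, yields $\sum_{i\text{ large}} M_i(|x(i)|/\rho) \leq 1-\nu$ for the second inequality, using $M_i(1)\geq 1$ to ensure $|x(i)|\leq\|x\|_{(M_i)_i}$ and $|x(i)|\leq\|x\|_H$ so that the relevant arguments stay in $[0,1]$.

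For small coordinates only a convex one-sided bound is available: $M_i(s)\leq (s/\nu)M_i(\nu) \leq C\nu\,M_i(1)\,s$ for $s\leq\nu$. This linear rather than quadratic estimate is the source of the extra factors $\nu^2$ and $1-\nu$ in the final constant. The main obstacle is controlling the resulting tails $\sum_{i\text{ small}} M_i(1)|x(i)|^2$ and $\sum_{i\text{ small}} M_i(1)|x(i)|$; pointwise each summand is small, of order $\nu^2 M_i(1)$ or $\nu M_i(1)$, but the weights $M_i(1)$ are a priori unbounded, so the naive summation fails. The dimension bound $n\leq 1/\nu$ must enter essentially here: intuitively, an $n$-dimensional subspace of $\ell_{(M_i)_i}$ cannot ``spread'' significant mass over too many coordinates carrying large $M_i(1)$ simultaneously. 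I expect the technical core of the proof to be this small-coordinate control, likely via an Auerbach-type basis of $E$ combined with a finite-dimensional counting argument exploiting the relation $n\nu\leq 1$; the fact that the final bound depends only on $C$ and $\nu$, with $n$ absorbed into $1/\nu$, is strong evidence that the dimension plays exactly this role of capping the tail.
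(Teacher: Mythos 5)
Your overall plan — find a Hilbertian norm on $E$ comparable to $\|\cdot\|_{(M_i)_i}$, split coordinates at the threshold $\nu$, and let the dimension constraint absorb the tail — is in the right spirit, and you even correctly diagnose that the tail control is the sticking point. But the specific Hilbertian norm you propose, $\|x\|_H^2 = \sum_i M_i(1)|x(i)|^2$, is the wrong choice, and no amount of counting saves it. The difficulty is not a technicality: the hypothesis constrains only the \emph{ratios} $M_i(\lambda t)/M_i(\lambda)$ for $t \in [\nu,1]$, and says nothing absolute about $M_i(1)$ other than $M_i(1)\geq 1$, nor about the shape of $M_i$ below scale $\nu$. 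An $M_i$ satisfying the hypothesis with $C=2$ can be essentially linear on some small initial interval $[0,\delta_i]$ while being $\approx M_i(1)t^2$ on $[\delta_i,1]$. Now take $E=\operatorname{span}(v)$ one-dimensional with $v = e_1 + \varepsilon e_2$, $M_1(t)=t^2$, and $M_2$ such that $M_2(\varepsilon/\rho)\approx 1$ is achieved in the linear regime where $M_2(s)\approx a_is$ with $a_i\ll M_2(1)$. Then $\|v\|_{(M_i)_i}=\rho$ can be made large while $\|v/\rho\|_H^2 = 1/\rho^2 + M_2(1)\varepsilon^2/\rho^2 \to 0$, so your second inequality $\|x\|_{(M_i)_i}^2 \leq \frac{C}{1-\nu}\|x\|_H^2$ fails already for $n=1$ (where the lemma itself is trivially true, $d_{BM}=1$). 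The dimension bound therefore cannot enter in the way you anticipate — there is no tail to count when $n=1$.

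The paper's fix is that the Hilbertian norm must be \emph{adapted to $E$}, not read off from the $M_i$ alone. One fixes an Auerbach basis $(x_1,\dots,x_n)$ of $E$, forms the reference unit vector $x = (|x_1|+\dots+|x_n|)/\||x_1|+\dots+|x_n|\|$, and defines
$$\vvvert y\vvvert^2 = \sum_{x(i)>0} |y(i)|^2\,\frac{M_i(x(i))}{x(i)^2}.$$
The point is that for $y\in S_E$ the Auerbach property gives $|y|\leq nx \leq (1/\nu)x$ coordinatewise, so every ratio $|y(i)|/x(i)$ lies in $[0,1/\nu]$; whenever $|y(i)|\geq \nu x(i)$ one can apply the hypothesis directly (with $\lambda=x(i)$, $t=|y(i)|/x(i)$ when $|y(i)|\leq x(i)$, and with $\lambda=|y(i)|$, $t=x(i)/|y(i)|\in[\nu,1]$ otherwise). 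This is exactly where $n\leq 1/\nu$ is used: it keeps the comparison between $y$ and $x$ inside the window $[\nu,1]$ where the hypothesis bites. The remaining coordinates, where $|y(i)|<\nu x(i)$, contribute only small error terms because $\sum_i M_i(x(i))=1$ (that normalization is precisely why $x$ — rather than the all-ones vector implicit in your $\|\cdot\|_H$ — is the right reference). So the Auerbach basis is not a device for bounding a residual tail; it is the ingredient that produces the correct Hilbertian norm in the first place.
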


\begin{claim} \label{claim:DistToEll2}
Let $ M_{1}, M_{2}, \dots $ be as in Lemma~\ref{lem:DistToEll2}. Then every $ x \in \ell_{(M_{i})_{i}} $ with $ \Vert x \Vert = 1 $ satisfies $ |x(i)| \leq 1 $ for each $ i $ and
$$ \sum_{i=1}^{\infty} M_{i}(|x(i)|) = 1. $$
Moreover, we have $ h_{(M_{i})_{i}} = \ell_{(M_{i})_{i}} $.
\end{claim}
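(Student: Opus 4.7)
The plan is to extract from the hypotheses a uniform $\Delta_2$ condition on the family $(M_i)_i$, and then apply standard Musielak--Orlicz arguments paralleling those for a single Orlicz function (see \cite[Proposition 4.a.4]{LindenstraussTzafririI}). All three conclusions will follow.

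The coordinate bound $|x(i)| \leq 1$ comes from convexity: since $M_i(0) = 0$ and $M_i(1) \geq 1$, one has $M_i(s) \geq s \geq 1$ for every $s \geq 1$ (the function $M_i(t)/t$ is nondecreasing). Thus, if $|x(i_0)| > 1$, picking any $\rho \in (1, |x(i_0)|)$ yields $M_{i_0}(|x(i_0)|/\rho) > 1$, which contradicts $\sum_j M_j(|x(j)|/\rho) \leq 1$ (valid because $\rho > \|x\| = 1$).

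The main technical step is extracting the uniform $\Delta_2$. Specializing the lower bound $M_i(\lambda t) \geq (t^2/C)M_i(\lambda)$ to $t = \nu$ rearranges as $M_i(\lambda) \leq (C/\nu^2) M_i(\nu \lambda)$ for all $\lambda \in (0, 1]$, uniformly in $i$. Iterating this $k$ times, where $k$ is fixed so that $\nu^k \leq 1/2$, and substituting $\lambda = 2s$ yields
\[
M_i(2s) \leq K M_i(s) \qquad (s \in (0, 1/2]),
\]
with $K := (C/\nu^2)^k$ independent of $i$: a $\Delta_2$ condition uniform across the family.

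With this in hand, the rest is routine. Monotone convergence as $\rho \to 1^+$ gives $\sum_i M_i(|x(i)|) \leq 1$. For $\rho \in [1/2, 1]$ and $|x(i)| \leq 1/2$, monotonicity and $\Delta_2$ provide the bound $M_i(|x(i)|/\rho) \leq M_i(2|x(i)|) \leq K M_i(|x(i)|)$, while the set $\{i : |x(i)| > 1/2\}$ is finite (by the lower bound, $M_i(\nu) \geq \nu^2/C$, so each such term contributes a fixed positive amount to the summable series $\sum M_i(|x(i)|)$). Dominated convergence then shows $\sum_i M_i(|x(i)|/\rho) \to \sum_i M_i(|x(i)|)$ as $\rho \to 1^-$; were this limit strictly less than $1$, we could find $\rho < 1$ with $\sum_i M_i(|x(i)|/\rho) \leq 1$, contradicting $\|x\| = 1$, so $\sum_i M_i(|x(i)|) = 1$. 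An analogous estimate, iterating $\Delta_2$ to accommodate arbitrary $\rho > 0$, yields $h_{(M_i)_i} = \ell_{(M_i)_i}$. The one substantive obstacle is the iteration producing uniform $\Delta_2$; everything downstream is a standard modification of the single-Orlicz-function arguments.
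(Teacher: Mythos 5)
Your argument is correct in its essentials and runs parallel to the paper's: monotone convergence gives $\sum_i M_i(|x(i)|) \leq 1$, a domination argument upgrades $\sum_i M_i(|x(i)|/\rho) > 1$ (for $\rho < 1$) to $\sum_i M_i(|x(i)|) \geq 1$, and a tail estimate gives $h_{(M_i)_i} = \ell_{(M_i)_i}$. Both you and the paper use the same consequence of the hypothesis, namely $M_i(s) \leq (C/\nu^2)M_i(\nu s)$ for $s \in (0,1]$; the difference is that you iterate this into a uniform $\Delta_2$ condition $M_i(2s) \leq K M_i(s)$ for $s \leq 1/2$, whereas the paper applies the estimate once, dominating $M_i(|x(i)|/\rho)$ for $\rho \in [\nu,1)$ directly by $M_i(|x(i)|/\nu)$, whose summability is immediate from $\sum_{|x(i)|<\nu}M_i(|x(i)|/\nu) \leq (C/\nu^2)\sum_i M_i(|x(i)|)$ plus the finiteness of $\{i : |x(i)| \geq \nu\}$. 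Your repackaging works but adds an iteration step the paper does not need.

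One step needs repair. To show $\{i : |x(i)| > 1/2\}$ is finite you invoke $M_i(\nu) \geq \nu^2/C$, which gives $M_i(|x(i)|) \geq M_i(\nu) \geq \nu^2/C$ only when $|x(i)| \geq \nu$. Since $\nu \in (0,1)$ is arbitrary it may exceed $1/2$, and then $|x(i)| > 1/2$ does not imply $|x(i)| \geq \nu$, so you have no uniform lower bound on $M_i(1/2)$ without further work. Either use the natural threshold and show $\{i : |x(i)| \geq \nu\}$ is finite, as the paper does (from $M_i(|x(i)|) \geq |x(i)|^2 M_i(1)/C \geq \nu^2/C$), or iterate as you already did for $\Delta_2$ to get $M_i(1/2) \geq M_i(\nu^k) \geq (\nu^2/C)^k$ for $k$ with $\nu^k \leq 1/2$.

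The ``moreover'' part is only sketched. Your plan --- establish $\sum_i M_i(|x(i)|/\rho)<\infty$ for every $\rho>0$ and conclude $x \in h_{(M_i)_i}$ --- works, but implicitly appeals to a Musielak--Orlicz analogue of \cite[Proposition 4.a.2]{LindenstraussTzafririI}, which should be proved or cited. The paper is more self-contained here: from $\sum_i M_i(|x(i)|/\nu) < \infty$ one picks $i_1$ with $\sum_{i\geq i_1}M_i(|x(i)|/\nu) \leq 1$, hence $\|P_{i_1}x\|\leq\nu\|x\|$, and iteration gives $\|P_{i_k}x\|\leq\nu^k\|x\|$, placing $x$ in the closed span of the $e_i$ directly.
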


\begin{proof}
From the definition of $ \Vert x \Vert $, we get that $ \sum M_{i}(|x(i)|/\rho) \leq 1 $ for $ \rho > 1 $. Clearly, $ \lim_{\rho \to 1} M_{i}(|x(i)|/\rho) \to M_{i}(|x(i)|) $ for each $ i $. By the monotone convergence theorem, we deduce that
$$ \sum M_{i}(|x(i)|) \leq 1. $$
In particular, for each $ i $, since $ M_{i}(|x(i)|) \leq 1 \leq M_{i}(1) $, we get $ |x(i)| \leq 1 $.

By our assumption, for each $ i $ with $ |x(i)| \geq \nu $, we have $ M_{i}(|x(i)|) \geq \frac{1}{C} |x(i)|^{2} \cdot M_{i}(1) \geq \frac{1}{C} |x(i)|^{2} $. So, $ \sum_{|x(i)| \geq \nu} |x(i)|^{2} \leq C < \infty $, and $ |x(i)| \geq \nu $ only for finitely many indices $ i $. If $ |x(i)| < \nu $, then $ M_{i}(|x(i)|) \geq \frac{1}{C} \nu^{2} M_{i}(|x(i)|/\nu) $. So, $ \sum_{|x(i)| < \nu} M_{i}(|x(i)|/\nu) \leq C/\nu^{2} $. We obtain
$$ \sum M_{i}(|x(i)|/\nu) < \infty. $$
Using the definition of $ \Vert x \Vert $ again, we get that $ \sum M_{i}(|x(i)|/\rho) > 1 $ for $ 0 < \rho < 1 $. By the dominated convergence theorem,
$$ \sum M_{i}(|x(i)|) \geq 1. $$

Concerning the moreover part, note that there is $ i_{1} $ such that $ \sum_{i=i_{1}}^{\infty} M_{i}(|x(i)|/\nu) \leq 1 $, and so $ \Vert P_{i_{1}} x \Vert \leq \nu = \nu \Vert x \Vert $, where $ P_{i_{1}} $ denotes the projection on the space of vectors supported by $ \{ i_{1}, i_{1}+1, \dots \} $. By induction, there are $ i_{k} \in \mathbb{N} $ such that $ \Vert P_{i_{k}} x \Vert \leq \nu^{k} \Vert x \Vert $. Consequently, $ x \in h_{(M_{i})_{i}} $.
\end{proof}

\begin{proof}[Proof of Lemma \ref{lem:DistToEll2}]
Fix an Auerbach basis $ (x_{1}, \dots, x_{n}) $ of $ E $ and let
$$ x = \frac{|x_{1}| + \dots + |x_{n}|}{\||x_{1}| + \dots + |x_{n}|\|}. $$
We define
$$ \vvvert y \vvvert^{2} = \sum_{x(i) > 0} |y(i)|^{2} \frac{M_{i}(x(i))}{x(i)^{2}}, \quad y \in E. $$
The lemma will be proved once we show that
$$ y \in S_{E} \quad \Rightarrow \quad \frac{1}{C}(1 - \nu) \leq \vvvert y \vvvert^{2} \leq C + \nu^{2}, $$
since $ (E, \vvvert \cdot \vvvert) $ is isometric to $ \ell_{2}^{n} $.

So, let $ y \in S_{E} $ be arbitrary. Write $ y = a_{1}x_{1} + \dots + a_{n}x_{n} $ with $ a_{1}, \dots, a_{n} \in \mathbb{K} $. Since the basis $ (x_{1}, \dots, x_{n}) $ is Auerbach, we have $ |a_{k}| \leq 1 $ for $ 1 \leq k \leq n $. Thus
$$
\begin{aligned}
|y| & \leq |x_{1}| + \dots + |x_{n}| \\
 & = \| |x_{1}| + \dots + |x_{n}| \| \cdot x \\
 & \leq nx.
\end{aligned}
$$
By Claim~\ref{claim:DistToEll2}, we have $ x(i) \leq 1, |y(i)| \leq 1 $ and
$$ \sum_{i=1}^{\infty} M_{i}(x(i)) = 1, \quad \sum_{i=1}^{\infty} M_{i}(|y(i)|) = 1. $$
Let us realize that
$$ |y(i)| \geq \nu x(i) > 0 \quad \Rightarrow \quad \frac{1}{C} \frac{|y(i)|^{2}}{x(i)^{2}} \leq \frac{M_{i}(|y(i)|)}{M_{i}(x(i))} \leq C \frac{|y(i)|^{2}}{x(i)^{2}}. $$
If $ |y(i)| \leq x(i) $, we apply the assumption of the lemma on $ \lambda = x(i), t = |y(i)|/x(i) $, otherwise we consider $ \lambda = |y(i)|, t = x(i)/|y(i)| $, where we use $ |y| \leq nx \leq (1/\nu)x $.

It follows that
$$ \frac{1}{C} \sum_{|y| \geq \nu x > 0} M_{i}(|y(i)|) \leq \sum_{|y| \geq \nu x > 0} |y(i)|^{2} \frac{M_{i}(x(i))}{x(i)^{2}} \leq C \sum_{|y| \geq \nu x > 0} M_{i}(|y(i)|). $$
We obtain
$$
\begin{aligned}
\vvvert y \vvvert^{2} & = \sum_{|y| \geq \nu x > 0} |y(i)|^{2} \frac{M_{i}(x(i))}{x(i)^{2}} + \sum_{|y| < \nu x} |y(i)|^{2} \frac{M_{i}(x(i))}{x(i)^{2}} \\
 & \leq C \sum_{|y| \geq \nu x > 0} M_{i}(|y(i)|) + \nu^{2} \sum_{|y| < \nu x} M_{i}(x(i)) \\
 & \leq C + \nu^{2}.
\end{aligned}
$$
At the same time, since
$$ \sum_{|y| < \nu x} M_{i}(|y(i)|) \leq \sum_{|y| < \nu x} M_{i}(\nu x(i)) \leq \nu \sum_{|y| < \nu x} M_{i}(x(i)) \leq \nu, $$
we have
$$
\begin{aligned}
\vvvert y \vvvert^{2} & \geq \sum_{|y| \geq \nu x > 0} |y(i)|^{2} \frac{M_{i}(x(i))}{x(i)^{2}} \\
 & \geq \frac{1}{C} \sum_{|y| \geq \nu x > 0} M_{i}(|y(i)|) = \frac{1}{C} \Big( 1 - \sum_{|y| < \nu x} M_{i}(|y(i)|) \Big) \\
 & \geq \frac{1}{C}(1 - \nu),
\end{aligned}
$$
which completes the proof of the lemma.
\end{proof}

\begin{lem} \label{lem:AsympHilb}
Let $ M_{1}, M_{2}, \dots $ be Orlicz functions with $ M_{i}(1) \geq 1 $ such that for every $ \varepsilon > 0 $ and $ 0 < \nu < 1 $, there is $ i_{0} $ such that
$$ \frac{1}{1+\varepsilon} t^{2} \leq \frac{M_{i}(\lambda t)}{M_{i}(\lambda)} \leq (1+\varepsilon) t^{2}, \quad i \geq i_{0}, \; 0 < \lambda \leq 1, \; \nu \leq t \leq 1. $$
Then the space $ \ell_{(M_{i})_{i}} $ is $ 1^{+} $-asymptotically Hilbertian.
\end{lem}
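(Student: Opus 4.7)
The plan is to deduce the lemma directly from Lemma~\ref{lem:DistToEll2} by passing to tail subspaces and choosing the parameters $C$ and $\nu$ appropriately small.

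Fix $n \in \N$ and $\varepsilon > 0$; we seek a finite-codimensional subspace $Y \subseteq \ell_{(M_i)_i}$ in which every $n$-dimensional subspace is $(1+\varepsilon)$-close to $\ell_2^n$ in the Banach--Mazur distance. The first step is to note that the bound provided by Lemma~\ref{lem:DistToEll2} is $\sqrt{C(C+\nu^2)/(1-\nu)}$, which tends to $1$ as $C \to 1^+$ and $\nu \to 0^+$. So I choose $\nu \in (0, 1/n]$ small and $\delta > 0$ small enough that
$$\sqrt{(1+\delta)\bigl((1+\delta) + \nu^2\bigr)/(1-\nu)} \;\leq\; 1 + \varepsilon.$$

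Next, apply the hypothesis of the lemma to this $\delta$ (in place of $\varepsilon$) and this $\nu$ to obtain an index $i_0$ such that
$$\frac{1}{1+\delta}\, t^{2} \;\leq\; \frac{M_{i}(\lambda t)}{M_{i}(\lambda)} \;\leq\; (1+\delta)\, t^{2}, \qquad i \geq i_{0},\ 0 < \lambda \leq 1,\ \nu \leq t \leq 1.$$
Let $Y$ be the tail subspace $\{x \in \ell_{(M_i)_i} : x(i) = 0 \text{ for } i < i_0\}$, which is of finite codimension $i_0 - 1$. Under the canonical identification, $Y$ is isometric to the Musielak--Orlicz space $\ell_{(M_i)_{i \geq i_0}}$, whose defining functions satisfy the hypothesis of Lemma~\ref{lem:DistToEll2} with constant $C = 1+\delta$.

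Now let $E \subseteq Y$ be any subspace with $\dim E = n$. Since $n \leq 1/\nu$, Lemma~\ref{lem:DistToEll2} applies and gives
$$d_{BM}(E, \ell_2^n) \;\leq\; \sqrt{(1+\delta)\bigl((1+\delta)+\nu^2\bigr)/(1-\nu)} \;\leq\; 1+\varepsilon.$$
As $n$ and $\varepsilon$ were arbitrary, $\ell_{(M_i)_i}$ is $1^+$-asymptotically Hilbertian.

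There is essentially no obstacle here beyond bookkeeping of parameters: all the analytic content lives in Lemma~\ref{lem:DistToEll2}, and the role of Lemma~\ref{lem:AsympHilb} is just to observe that the uniform pointwise convergence hypothesis lets us make the constant $C$ in that lemma arbitrarily close to $1$ by moving far enough along the basis, while the dimensional constraint $n \leq 1/\nu$ is harmlessly absorbed by choosing $\nu$ small at the outset.
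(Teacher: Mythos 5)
Your proof is correct and follows the same approach as the paper: reduce to Lemma~\ref{lem:DistToEll2} on a tail subspace $\{x : x(i) = 0 \text{ for } i < i_0\}$ after tuning the parameters so that the bound $\sqrt{C(C+\nu^2)/(1-\nu)}$ is pushed below the target constant. The only cosmetic difference is your renaming of the hypothesis parameter to $\delta$ to avoid a symbol clash, which the paper handles implicitly.
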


\begin{proof}
Let $ n \in \mathbb{N} $ and $ K > 1 $. We choose $ \varepsilon > 0 $ and $ 0 < \nu < 1 $ such that $ \nu \leq 1/n $ and $ \sqrt{(1+\varepsilon)(1+\varepsilon+\nu^{2})/(1-\nu)} \leq K $. Let $ i_{0} $ be as in the assumption. Then we can apply Lemma~\ref{lem:DistToEll2} on the sequence $ (M_{i})_{i \geq i_{0}} $, concluding that subspace consisting of all sequences in $ \ell_{(M_{i})_{i}} $ supported by $ \{ i_{0}, i_{0}+1, \dots \} $ has the property that any subspace $ E $ of dimension $ n $ satisfies
$$ d_{BM}(E, \ell_{2}^{n}) \leq \sqrt{(1+\varepsilon)(1+\varepsilon+\nu^{2})/(1-\nu)} \leq K. $$
This shows that $ \ell_{(M_{i})_{i}} $ is indeed $ 1^{+} $-asymptotically Hilbertian.
\end{proof}

\begin{proof}[Proof of Proposition \ref{prop:special}]
For $ k \in \mathbb{N} $, let us denote $ \lambda_{k} = M^{-1}(1/k) $ and let
$$ M_{k}(t) = \frac{M(\lambda_{k}t)}{M(\lambda_{k})} = kM(\lambda_{k}t), \quad t \geq 0. $$
By Proposition~\ref{Prop:IsomOrlicz}, the spaces $ h_{M} $ and $ h_{M_{k}} $ are isomorphic, thus $ h_{M_{k}} $ is not Hilbertian. For a large enough $ s(k) \in \mathbb{N} $, the Banach-Mazur distance of the subspace $ \mathrm{span} \{ e_{1}, \dots, e_{s(k)} \} $ of $ h_{M_{k}} $ to the $ s(k) $-dimensional Euclidean space is at least $ k $.

Let $ (k_{i})_{i} $ be the sequence of the form $ 1, \dots, 1, 2, \dots, 2, 3, \dots, 3, \dots $, in which $ k $ appears $ s(k) $ times. We can find a block sequence $ x_{1}, x_{2}, \dots $ in $ h_{M} $ such that $ \mathrm{supp} \, x_{i} $ has exactly $ k_{i} $ elements and $ x_{i}(n) = \lambda_{k_{i}} $ for $ n \in \mathrm{supp} \, x_{i} $. Then the space generated by $ (x_{i})_{i} $ is not Hilbertian. Indeed, for each $ k $, the vectors $ \{ x_{i} : k_{i} = k \} $ generate a subspace isometric to the subspace $ \mathrm{span} \{ e_{1}, \dots, e_{s(k)} \} $ of $ h_{M_{k}} $, whose distance to $ \ell_{2}^{s(k)} $ is at least $ k $. It remains to show that the space generated by $ (x_{i})_{i} $ is $ 1^{+} $-asymptotically Hilbertian. Note that this subspace is isometric to the space $ h_{(N_{i})_{i}} $, where $ N_{i} = M_{k_{i}} $.

Since the functions $ M(\lambda t)/M(\lambda) $ are non-decreasing, they actually converge uniformly to $ t^{2} $ on $ [0, 1] $ as $ \lambda \to 0+ $. It follows that for every $ 0 < \nu < 1 $ and $ \varepsilon > 0 $, there is $ \delta > 0 $ such that
$$ \frac{1}{1+\varepsilon} t^{2} \leq \frac{M(\lambda t)}{M(\lambda)} \leq (1+\varepsilon) t^{2}, \quad 0 < \lambda \leq \delta, \; \nu \leq t \leq 1. $$
If we choose $ k(\delta) \in \mathbb{N} $ such that $ \lambda_{k(\delta)} \leq \delta $, then
$$ \frac{1}{1+\varepsilon} t^{2} \leq \frac{M_{k}(\lambda t)}{M_{k}(\lambda)} \leq (1+\varepsilon) t^{2}, \quad k \geq k(\delta), \; 0 < \lambda \leq 1, \; \nu \leq t \leq 1, $$
as $ \frac{M_{k}(\lambda t)}{M_{k}(\lambda)} = \frac{M(\lambda_{k} \lambda t)}{M(\lambda_{k} \lambda)} $ and $ 0 < \lambda_{k} \lambda \leq \lambda_{k(\delta)} \leq \delta $. It follows that the sequence $ N_{i} = M_{k_{i}} $ satisfies the assumptions of Lemma~\ref{lem:AsympHilb}, which completes the proof.
\end{proof}

\section{The general case}\label{sec:ProofMain}

The goal of this section is to prove Theorem \ref{thm:main} in its full generality.

\begin{lem} \label{lem:GenCaseStep1}
Let $ M $ be an Orlicz function such that $ \alpha_{M} = \beta_{M} = 2 $. Let $ 0 < \tau < 1 $, $ 0 < \kappa < 1 $ and $ \eta > 0 $ be such that $ \kappa > \frac{1}{1+\eta} $. Then there is $ R \in \mathbb{N} $ such that
the function
$$ N(t) = \sum_{r=0}^{2R} \kappa^{|r-R|} \tau^{-2r} M(\tau^{r} t), \quad t \geq 0, $$
satisfies
$$ \frac{1}{1+\eta} \tau^{2} \leq \frac{N(\lambda \tau)}{N(\lambda)} \leq (1+\eta) \tau^{2}, \quad 0 < \lambda \leq 1. $$
\end{lem}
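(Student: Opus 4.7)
The plan is to rewrite $\rho(\lambda) := N(\lambda \tau)/(\tau^2 N(\lambda))$ as a convex combination of $\kappa$ and $\kappa^{-1}$ plus a small boundary correction, and use $\kappa > 1/(1+\eta)$ to get the main term strictly inside $[1/(1+\eta), 1+\eta]$ with slack. Setting $\Phi_s(\lambda) := \tau^{-2s} M(\tau^s \lambda)$, the identity $\Phi_s(\tau \lambda) = \tau^2 \Phi_{s+1}(\lambda)$ combined with the substitution $k = r - R$ gives, with $F_k := \Phi_{k+R}(\lambda)$,
$$\rho(\lambda) = \frac{\sum_{k=-R}^{R} \kappa^{|k|} F_{k+1}}{\sum_{k=-R}^{R} \kappa^{|k|} F_k}.$$
Denote $A = \sum_{k=-R}^{0} \kappa^{-k} F_k$ and $B = \sum_{k=1}^{R} \kappa^k F_k$, so the denominator is $A + B$. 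Shifting $k \mapsto k-1$ in the numerator and splitting by the sign of $k-1$ yields, after a brief bookkeeping,
$$\sum_{k=-R}^{R} \kappa^{|k|} F_{k+1} = \kappa A + \kappa^{-1} B + \kappa^R F_{R+1} - \kappa^{R+1} F_{-R}.$$
The first two terms form a convex combination $\kappa \cdot A/(A+B) + \kappa^{-1} \cdot B/(A+B)$, so the corresponding ratio lies in $[\kappa, \kappa^{-1}] \subsetneq [1/(1+\eta), 1+\eta]$ by hypothesis; the rest, divided by $A+B$, is the boundary correction.

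To control that correction uniformly in $\lambda \in (0, 1]$, note that $A + B \geq F_0$ from the $k = 0$ term in $A$, so it suffices to bound $\kappa^R(F_{R+1} + \kappa F_{-R})/F_0$. Setting $f(\mu) := M(\mu)/\mu^2$, we have $F_k = \lambda^2 f(\tau^{k+R} \lambda)$. The hypothesis $\alpha_M = \beta_M = 2$ gives, for every $\varepsilon > 0$, constants $c_\varepsilon, C_\varepsilon > 0$ with $c_\varepsilon \theta^\varepsilon \leq f(\xi \theta)/f(\xi) \leq C_\varepsilon \theta^{-\varepsilon}$ for all $\xi, \theta \in (0, 1]$. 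Applied with $(\xi, \theta) = (\tau^R \lambda, \tau^{R+1})$ and $(\xi, \theta) = (\lambda, \tau^R)$ respectively, this gives $F_{R+1}/F_0 \leq C_\varepsilon \tau^{-(R+1)\varepsilon}$ and $F_{-R}/F_0 \leq c_\varepsilon^{-1} \tau^{-R\varepsilon}$, so the correction is $O\big((\kappa \tau^{-\varepsilon})^R\big)$ with the implicit constant uniform in $\lambda$.

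To conclude, fix $\varepsilon > 0$ small enough that $\kappa \tau^{-\varepsilon} < 1$; this is possible since $\tau^\varepsilon \to 1 > \kappa$ as $\varepsilon \to 0^+$. Then the correction decays exponentially in $R$, and choosing $R$ large enough makes it strictly smaller than the positive slack $\min\!\big(\kappa - 1/(1+\eta),\ 1+\eta - \kappa^{-1}\big)$. This places $\rho(\lambda)$ in $[1/(1+\eta), 1+\eta]$ uniformly in $\lambda \in (0, 1]$, which is the claim. The only real obstacle is the algebraic bookkeeping that exhibits the convex-combination structure in the shifted numerator; once that structure is recognized, the slowly-varying consequences of $\alpha_M = \beta_M = 2$ close the proof.
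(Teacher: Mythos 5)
Your proof is correct and takes essentially the same approach as the paper's: the same shift-and-reindex identity, the same key bound $N(\lambda)\geq \tau^{-2R}M(\tau^R\lambda)$ to control the two boundary terms, and the same use of $\alpha_M=\beta_M=2$ (via exponents $2\pm\varepsilon$, or $q<2<p$ in the paper's notation) to make the boundary contribution decay like $(\kappa\tau^{-\varepsilon})^R$. The paper works with the differences $\tau^{-2}N(\tau t)-\kappa^{\pm 1}N(t)$ while you present the same computation as a convex combination of $\kappa$ and $\kappa^{-1}$ plus correction, but the algebra and the choice of parameters are the same.
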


\begin{proof}
We choose $ 1 \leq q < 2 < p < \infty $ such that $ \kappa \cdot \tau^{2-p} < 1 $ and $ \kappa \cdot \tau^{q-2} < 1 $. Since $ \alpha_{M} = \beta_{M} = 2 $, there are $ C, c > 0 $ such that
$$ c \Big( \frac{v}{u} \Big)^{p} \leq \frac{M(v)}{M(u)} \leq C \Big( \frac{v}{u} \Big)^{q}, \quad 0 < v \leq u \leq 1. $$
We pick a large enough $ R \in \mathbb{N} $ such that
$$ \kappa - (\kappa \cdot \tau^{2-p})^{R} \cdot \frac{\kappa}{c} > \frac{1}{1+\eta} \quad \textrm{and} \quad \frac{1}{\kappa} + (\kappa \cdot \tau^{q-2})^{R} \cdot C \tau^{q-2} < 1 + \eta. $$

Let us show that the choice of $ R $ works. Fix $ 0 < t \leq 1 $. We have
$$
\begin{aligned}
\tau^{-2} N(\tau t) - \kappa^{\pm 1} N(t) & = \sum_{r=0}^{2R} \kappa^{|r-R|} \tau^{-2(r+1)} M(\tau^{r+1} t) - \kappa^{\pm 1} \sum_{r=0}^{2R} \kappa^{|r-R|} \tau^{-2r} M(\tau^{r} t) \\
 & = \sum_{s=1}^{2R+1} \kappa^{|s-1-R|} \tau^{-2s} M(\tau^{s} t) - \sum_{s=0}^{2R} \kappa^{\pm 1+|s-R|} \tau^{-2s} M(\tau^{s} t) \\
 & = \kappa^{R} \tau^{-2(2R+1)} M(\tau^{2R+1} t) - \kappa^{\pm 1+R} M(t) \phantom{\sum_{s=1}^{2R}} \\
 & \quad \quad + \sum_{s=1}^{2R} (\kappa^{|s-1-R|} - \kappa^{\pm 1+|s-R|}) \tau^{-2s} M(\tau^{s} t),
\end{aligned}
$$
and since $ \kappa^{1+|s-R|} \leq \kappa^{|s-1-R|} \leq \kappa^{-1+|s-R|} $, we obtain
$$ \tau^{-2} N(\tau t) - \kappa N(t) \geq - \kappa^{1+R} M(t), \quad \tau^{-2} N(\tau t) - \frac{1}{\kappa} N(t) \leq \kappa^{R} \tau^{-2(2R+1)} M(\tau^{2R+1} t). $$
As $ N(t) = \sum_{r=0}^{2R} \kappa^{|r-R|} \tau^{-2r} M(\tau^{r} t) \geq \kappa^{0} \tau^{-2R} M(\tau^{R} t) $, we get
$$ M(t) = \frac{M(t)}{M(\tau^{R} t)} \cdot \tau^{2R} \cdot \tau^{-2R} M(\tau^{R} t) \leq \frac{1}{c} (\tau^{-R})^{p} \cdot \tau^{2R} \cdot N(t) = \frac{1}{c} \cdot \tau^{R(2-p)} \cdot N(t), $$
$$
\begin{aligned}
\tau^{-2(2R+1)} M(\tau^{2R+1} t) & = \frac{M(\tau^{2R+1} t)}{M(\tau^{R} t)} \cdot \tau^{-2(R+1)} \cdot \tau^{-2R} \cdot M(\tau^{R} t) \\
 & \leq C \cdot (\tau^{R+1})^{q} \cdot \tau^{-2(R+1)} \cdot N(t) = C \cdot \tau^{(R+1)(q-2)} \cdot N(t).
\end{aligned}
$$
Hence,
$$ \tau^{-2} N(\tau t) \geq \Big( \kappa - \kappa^{1+R} \cdot \frac{1}{c} \cdot \tau^{R(2-p)} \Big) \cdot N(t) > \frac{1}{1+\eta} \cdot N(t), $$
$$ \tau^{-2} N(\tau t) \leq \Big( \frac{1}{\kappa} + \kappa^{R} \cdot C \cdot \tau^{(R+1)(q-2)} \Big) \cdot N(t) < (1+\eta) \cdot N(t), $$
which completes the proof.
\end{proof}

\begin{lem} \label{lem:GenCaseStep2}
Let $ M $ be an Orlicz function such that $ \alpha_{M} = \beta_{M} = 2 $. Then for every $ \varepsilon > 0 $ and $ 0 < \nu < 1 $, there are $ \sigma \in \mathbb{N} $ and $ \lambda_{1}, \dots, \lambda_{\sigma} \in (0, 1] $ such that the function
$$ N(t) = \sum_{j=1}^{\sigma} M(\lambda_{j} t), \quad t \geq 0, $$
satisfies $ N(1) \geq 1 $ and
$$ \frac{1}{1+\varepsilon} t^{2} \leq \frac{N(\lambda t)}{N(\lambda)} \leq (1+\varepsilon) t^{2}, \quad 0 < \lambda \leq 1, \; \nu \leq t \leq 1. $$
\end{lem}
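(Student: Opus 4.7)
The plan is to apply Lemma~\ref{lem:GenCaseStep1} with carefully chosen parameters $\tau, \kappa, \eta$, propagate the ratio estimate at the single point $t = \tau$ to the whole interval $[\nu, 1]$ by iteration and monotonicity, and finally approximate the resulting real linear combination by one with integer multiplicities to match the required form $\sum_{j=1}^{\sigma} M(\lambda_{j} t)$. I would first choose $\tau \in (1/\sqrt{1+\varepsilon}, 1)$ (so that $(1+\varepsilon)\tau^{2} > 1$), then $K \in \mathbb{N}$ with $\tau^{K} \leq \nu$, then $\eta > 0$ small enough that $(1+\eta)^{K+1} < (1+\varepsilon)\tau^{2}$, and finally $\kappa \in (1/(1+\eta), 1)$; applying Lemma~\ref{lem:GenCaseStep1} produces some $R \in \mathbb{N}$ for which
$$ N_{1}(t) = \sum_{r=0}^{2R} \kappa^{|r-R|} \tau^{-2r} M(\tau^{r} t) $$
satisfies $\tau^{2}/(1+\eta) \leq N_{1}(\lambda \tau)/N_{1}(\lambda) \leq (1+\eta)\tau^{2}$ for every $\lambda \in (0,1]$.

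To pass to integer multiplicities, for a large integer $q$ I would set $m_{r} = \lceil q\kappa^{|r-R|}\tau^{-2r} \rceil$ and define $N(t) = \sum_{r=0}^{2R} m_{r} M(\tau^{r} t)$, which takes the desired form after listing each $\tau^{r}$ with multiplicity $m_{r}$. Since each coefficient $\kappa^{|r-R|}\tau^{-2r}$ is at least $\kappa^{R}$, one gets $\sum_{r} M(\tau^{r} t) \leq \kappa^{-R} N_{1}(t)$, whence $qN_{1}(t) \leq N(t) \leq qN_{1}(t)(1+\kappa^{-R}/q)$. Choosing $q$ large enough that $(1+\eta)(1+\kappa^{-R}/q)$ is bounded above by some $1+\eta'$ still satisfying $(1+\eta')^{K+1} \leq (1+\varepsilon)\tau^{2}$, the function $N$ inherits a ratio bound at $t = \tau$ with $\eta'$ in place of $\eta$.

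I would then iterate this bound to the powers $t = \tau^{k}$ for $0 \leq k \leq K$, obtaining $\tau^{2k}/(1+\eta')^{k} \leq N(\lambda\tau^{k})/N(\lambda) \leq (1+\eta')^{k}\tau^{2k}$, and interpolate to an arbitrary $t \in [\nu, 1]$ by sandwiching $\tau^{k+1} \leq t \leq \tau^{k}$ with some $k \leq K$ and invoking monotonicity of the Orlicz function $N$. Using $\tau^{2(k+1)} \geq t^{2}\tau^{2}$ and $\tau^{2k} \leq t^{2}/\tau^{2}$, this yields the outer bounds $\tau^{2}t^{2}/(1+\eta')^{K+1} \leq N(\lambda t)/N(\lambda) \leq (1+\eta')^{K}t^{2}/\tau^{2}$, which by the choice of $\eta'$ lie inside $[t^{2}/(1+\varepsilon), (1+\varepsilon)t^{2}]$. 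The normalization $N(1) \geq 1$ is ensured by further enlarging $q$, since $N(1) \geq m_{0} M(1) \geq q \kappa^{R} M(1)$ and $M(1) > 0$ under the hypothesis $\alpha_{M} = \beta_{M} = 2$.

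The main obstacle I anticipate is the careful ordering of parameter choices: $\tau$ must first be pushed close enough to $1$ that $(1+\varepsilon)\tau^{2}$ leaves room for the telescoping loss, then $K$ and hence the number of iterations is determined by $\tau$ and $\nu$, then $\eta$ must be small relative to $K$, and only after the Lemma~\ref{lem:GenCaseStep1} parameters $\kappa, R$ are fixed can $q$ be chosen large enough to simultaneously absorb the approximation slack $\kappa^{-R}/q$ and enforce the normalization $N(1) \geq 1$. The strict inequality $(1+\eta)^{K+1} < (1+\varepsilon)\tau^{2}$ built in at the start is what keeps this chain of losses safely below the final target $1+\varepsilon$.
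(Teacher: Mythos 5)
Your proof is correct and follows the same overall plan as the paper's: invoke Lemma~\ref{lem:GenCaseStep1}, iterate the resulting one-step bound at $t=\tau$ over powers $\tau^{k}$, and interpolate to all of $[\nu,1]$ using the monotonicity of $N$, with the parameters chained in the same order. The one point of genuine divergence is how you force integer multiplicities. The paper chooses $\tau$ and $\kappa$ \emph{rational} at the outset; then the finitely many coefficients $\kappa^{|r-R|}\tau^{-2r}$, $0\leq r\leq 2R$, are all rational, so a single natural multiplier $K$ (a common denominator, further enlarged to secure $N(1)\geq 1$) makes them simultaneously integral with no error, and the ratio bound is preserved verbatim since $N$ is literally rescaled. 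You instead round up, setting $m_{r}=\lceil q\kappa^{|r-R|}\tau^{-2r}\rceil$ for large $q$, which costs a multiplicative slack of $1+\kappa^{-R}/q$ that you absorb into an inflated $\eta'>\eta$. Your bookkeeping for this is sound: the strict inequality built in at the start leaves room, $R$ is fixed before $q$ so the slack can be driven to zero, and $q$ can be enlarged further for $N(1)\geq 1$. The rationality trick buys a cleaner argument with no slack to track, but either route establishes the lemma.
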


\begin{proof}
We pick a rational $ 0 < \tau < 1 $ such that $ \tau^{2} > \frac{1}{1+\varepsilon} $. Let $ L \in \mathbb{N} $ be large enough to satisfy $ \tau^{L} \leq \nu $. For a small enough $ \eta > 0 $, we have
$$ \frac{1}{(1+\eta)^{L}} \tau^{2} > \frac{1}{1+\varepsilon}. $$
Next, we pick a rational $ 0 < \kappa < 1 $ such that $ \kappa > \frac{1}{1+\eta} $. Let $ R \in \mathbb{N} $ be given by Lemma~\ref{lem:GenCaseStep1}. For a suitable $ K \in \mathbb{N} $, the function
$$ N(t) = K \cdot \sum_{r=0}^{2R} \kappa^{|r-R|} \tau^{-2r} M(\tau^{r} t), \quad t \geq 0, $$
satisfies $ N(1) \geq 1 $ and the numbers $ K \cdot \kappa^{|r-R|} \tau^{-2r} $, where $ 0 \leq r \leq 2R $, are natural.

Since the present $ N $ is a multiple of $ N $ from Lemma~\ref{lem:GenCaseStep1}, the property
$$ \frac{1}{1+\eta} \tau^{2} \leq \frac{N(\lambda \tau)}{N(\lambda)} \leq (1+\eta) \tau^{2}, \quad 0 < \lambda \leq 1, $$
is preserved. A simple induction argument gives
$$ \frac{1}{(1+\eta)^{l}} \tau^{2l} \leq \frac{N(\lambda \tau^{l})}{N(\lambda)} \leq (1+\eta)^{l} \tau^{2l}, \quad l \in \mathbb{N}, \; 0 < \lambda \leq 1. $$
For $ \nu \leq t \leq 1 $, as $ \tau^{L} \leq \nu $, there is $ l \in \{ 1, 2, \dots, L \} $ such that $ \tau^{l} \leq t \leq \tau^{l-1} $, thus
$$ \frac{N(\lambda t)}{N(\lambda)} \geq \frac{N(\lambda \tau^{l})}{N(\lambda)} \geq \frac{1}{(1+\eta)^{l}} \tau^{2l} \geq \frac{1}{(1+\eta)^{L}} \tau^{2} \tau^{2(l-1)} \geq \frac{1}{1+\varepsilon} t^{2}, $$
$$ \frac{N(\lambda t)}{N(\lambda)} \leq \frac{N(\lambda \tau^{l-1})}{N(\lambda)} \leq (1+\eta)^{l-1} \tau^{2(l-1)} \leq (1+\eta)^{L} \tau^{-2} \tau^{2l} \leq (1+\varepsilon) t^{2}. $$

It remains to realize that $ N $ is of the form $ N(t) = \sum_{j=1}^{\sigma} M(\lambda_{j} t) $ for some $ \sigma $ and $ \lambda_{1}, \dots, \lambda_{\sigma} $. We have $ N(t) = \sum_{r=0}^{2R} \omega_{r} M(\tau^{r} t) $ for the natural numbers $ \omega_{r} = K \cdot \kappa^{|r-R|} \tau^{-2r} $. So, we can consider a sequence $ \lambda_{1}, \dots, \lambda_{\sigma} $ consisting of the numbers $ \tau^{r}, 0 \leq r \leq 2R, $ in which $ \tau^{r} $ appears exactly $ \omega_{r} $ times.
\end{proof}

\begin{proof}[Proof of Theorem \ref{thm:main}]
For every $ k \in \mathbb{N} $, Lemma~\ref{lem:GenCaseStep2} provides us with numbers $ \sigma(k) \in \mathbb{N} $ and $ \lambda^{(k)}_{1}, \lambda^{(k)}_{2}, \dots, \lambda^{(k)}_{\sigma(k)} \in (0, 1] $ such that the function
$$ N_{k}(t) = \sum_{j=1}^{\sigma(k)} M(\lambda^{(k)}_{j} t), \quad t \geq 0, $$
satisfies $ N_{k}(1) \geq 1 $ and
$$ \frac{1}{1+2^{-k}} t^{2} \leq \frac{N_{k}(\lambda t)}{N_{k}(\lambda)} \leq (1+2^{-k}) t^{2}, \quad 0 < \lambda \leq 1, \; 2^{-k} \leq t \leq 1. $$
By Proposition~\ref{Prop:IsomOrlicz}, since $ M(\lambda^{(k)}_{1} t) \leq N_{k}(t) \leq \sigma(k) M(t) $, the spaces $ h_{M} $ and $ h_{N_{k}} $ are isomorphic, thus $ h_{N_{k}} $ is not Hilbertian. For a large enough $ s(k) \in \mathbb{N} $, the Banach-Mazur distance of the subspace $ \mathrm{span} \{ e_{1}, \dots, e_{s(k)} \} $ of $ h_{N_{k}} $ to the $ s(k) $-dimensional Euclidean space is at least $ k $.

Now, let $ M_{1}, M_{2}, \dots, $ be the sequence of the form $ N_{1}, \dots, N_{1}, N_{2}, \dots, N_{2}, N_{3}, \dots, N_{3}, \dots $, in which $ N_{k} $ appears $ s(k) $ times. Due to Lemma~\ref{lem:AsympHilb}, the space $ h_{(M_{i})_{i}} $ is $ 1^{+} $-asymptotically Hilbertian. At the same time, it is not Hilbertian. Indeed, for any $ k $, as $ N_{k} $ appears $ s(k) $ times in the sequence $ (M_{i})_{i} $, the space $ h_{(M_{i})_{i}} $ contains an isometric copy of $ \mathrm{span} \{ e_{1}, \dots, e_{s(k)} \} $ in $ h_{N_{k}} $, whose distance to $ \ell_{2}^{s(k)} $ is at least $ k $.

It remains to show that $ h_{M} $ contains an isometric copy of $ h_{(M_{i})_{i}} $. Let $ k_{i} $ be the numbers such that $ M_{i} = N_{k_{i}} $. We can find a sequence $ x_{1}, x_{2}, \dots $ of vectors in $ h_{M} $ with finite pairwise disjoint supports such that $ x_{i} $ is a rearrangement of the vector $ (\lambda^{(k_{i})}_{1}, \dots, \lambda^{(k_{i})}_{\sigma(k_{i})}, 0, 0, \dots) $. Then $ x_{1}, x_{2}, \dots $ is $ 1 $-equivalent with the canonical basis $ e_{i} $ in $ h_{(M_{i})_{i}} $.
\end{proof}

\section{Some ergodic twisted Hilbert spaces}\label{sec:Twisted}

We start with recalling some basic facts about twisted Hilbert spaces.

\begin{defin}
    A \textit{twisted Hilbert space} is a Banach space $X$ containing a Hilbertian subspace $Y$ such that the quotient $X/Y$ is Hilbertian.
\end{defin}

Hilbertian spaces are of course twisted Hilbert, but there are also non-Hilbertian twisted Hilbert spaces. The first example was built by Enflo, Lindenstrauss and Pisier \cite{EnfloLindenstraussPisier}, and shortly after, a general theory of twisted sums of quasi-Banach spaces (generalizing the notion of twisted Hilbert spaces) was developed by Kalton and Peck in \cite{KaltonPeck}, where a class of new examples of twisted Hilbert spaces, the spaces $\ell_2(\phi)$, were exhibited. Following \cite{KaltonCentralizersBook}, we recall below a method for constructing twisted Hilbert spaces based on centralizers, that can be seen as a particular case of the general construction of Kalton and Peck. In this section, when considering measures spaces $(E, \mu)$, it will always be assumed that $E$ is a Polish space and $\mu$ a sigma-finite Borel measure; this ensures in particular that spaces $L_p(\mu)$, $1 \leqslant p < \infty$, are separable. Recall that if $(E, \mu)$ is a measure space, $L_0(\mu)$ denotes the vector space of all measurable maps $E \to \mathbb{K}$ considered up to equality almost everywhere.

\begin{defin}
    Let $(E, \mu)$ be a measure space. A \textit{centralizer} on $L_2(\mu)$ is a homogeneous map $\Omega \colon L_2(\mu) \to L_0(\mu)$ for which there exists a constant $C \geqslant 0$ such that for every $f \in L_2(\mu)$ and $u \in L_\infty(\mu)$, one has $\Omega(uf) - u\Omega(f) \in L_2(\mu)$ and $$\|\Omega(uf) - u\Omega(f)\|_{L_2(\mu)} \leqslant C\|u\|_{L_\infty(\mu)}\|f\|_{L_2(\mu)}.$$
\end{defin}

\noindent (Maps $\Omega$ as above are called \textit{homogeneous centralizers} in \cite{KaltonCentralizersBook}, but here we decided to follow the more recent terminology from \cite{CabelloCastillo}.) Recall that a \textit{quasinorm} on a vector space $X$ is a map $\|\cdot\| \colon X \to \R_+$ satisfying the same axioms as a norm except the triangle inequality, which is replaced by the weaker inequality $\|x+y\| \leqslant C(\|x\| + \|y\|)$ for some fixed constant $C \geqslant 1$; equivalence of quasinorms is defined the same way as equivalence of norms. The theorem below summarizes the construction of twisted Hilbert spaces given in \cite[Section 4]{KaltonCentralizersBook}.

\begin{thm}\label{thm:TwistedFromCentralizer}
    Let $(E, \mu)$ be a measure space and $\Omega \colon L_2(\mu) \to L_0(\mu)$ be a centralizer. Let $$L_2(\mu) \oplus_\Omega L_2(\mu) \coloneq \{(f, g) \in L_0(\mu) \times L_2(\mu) \mid f - \Omega(g) \in L_2(\mu)\}.$$
    Then $L_2(\mu) \oplus_\Omega L_2(\mu)$ is a vector space and $$\|(f, g)\|_\Omega \coloneq \|f-\Omega(g)\|_{L_2(\mu)} + \|g\|_{L_2(\mu)}$$ defines a quasinorm on it. This quasinorm is equivalent to a complete norm, and the Banach space $L_2(\mu) \oplus_\Omega L_2(\mu)$ obtained in this way is a twisted Hilbert space.
\end{thm}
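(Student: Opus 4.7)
The plan is to verify the four assertions in sequence. The key preliminary is the quasi-linearity of $\Omega$, which I would prove first: there exists $C' \geqslant 0$ such that
$$\|\Omega(g_1 + g_2) - \Omega(g_1) - \Omega(g_2)\|_{L_2(\mu)} \leqslant C'\bigl(\|g_1\|_{L_2(\mu)} + \|g_2\|_{L_2(\mu)}\bigr)$$
for all $g_1, g_2 \in L_2(\mu)$. To obtain this (Kalton's classical trick, which works unchanged in the complex setting), set $h = |g_1| + |g_2| \in L_2(\mu)$ and define $u_i = g_i/h$ on $\{h > 0\}$, extended by $0$ elsewhere, so that $\|u_i\|_{L_\infty(\mu)} \leqslant 1$ and $g_i = u_i h$; applying the centralizer inequality to the products $u_1 h$, $u_2 h$, and $(u_1 + u_2) h$ and combining the three resulting estimates yields the bound.

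With quasi-linearity in hand, closure of $L_2(\mu) \oplus_\Omega L_2(\mu)$ under addition and the triangle inequality (up to a constant) for $\|\cdot\|_\Omega$ both follow from the decomposition
$$(f_1 + f_2) - \Omega(g_1 + g_2) = (f_1 - \Omega(g_1)) + (f_2 - \Omega(g_2)) + \bigl[\Omega(g_1) + \Omega(g_2) - \Omega(g_1 + g_2)\bigr],$$
whose right-hand side lies in $L_2(\mu)$ with norm at most $(1 + C')\bigl(\|(f_1, g_1)\|_\Omega + \|(f_2, g_2)\|_\Omega\bigr)$. Positive definiteness of $\|\cdot\|_\Omega$ uses $\Omega(0) = 0$ (from homogeneity of $\Omega$), closure under scalar multiplication is immediate, and completeness is verified directly: for a Cauchy sequence $(f_n, g_n)$, the components $g_n$ and $f_n - \Omega(g_n)$ are Cauchy in $L_2(\mu)$ with limits $g$ and $h$, and $(h + \Omega(g), g) \in L_0(\mu) \times L_2(\mu)$ is the $\|\cdot\|_\Omega$-limit.

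The delicate step, and the one I expect to be the main obstacle, is promoting $\|\cdot\|_\Omega$ to an equivalent complete norm, i.e.\ showing that the convex hull of the unit quasi-ball is bounded. Naive iteration of the two-term quasi-linearity estimate over $n$ summands produces a bound growing with $n$, so a finer use of the centralizer axiom is needed: for a convex combination $\sum \lambda_i g_i$, one should simultaneously exploit the boundedness of $v = \sum \lambda_i u_i$ (when each $g_i = u_i h$ for a common dominating $h$) and the $L_2$-control of $h$, rather than estimating them separately. This is the substance of Kalton's theorem that centralizer-induced twisted sums on $L_2$ are Banach, after which the Minkowski functional of a convex, balanced, bounded neighborhood of $0$ supplies the equivalent complete norm. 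Once this is settled, the twisted Hilbert structure is transparent: $Y = \{(f, 0) : f \in L_2(\mu)\}$ is a closed subspace on which $\|\cdot\|_\Omega$ restricts to $\|f\|_{L_2(\mu)}$ (since $\Omega(0) = 0$), so $Y$ is isometric to $L_2(\mu)$; and the projection $(f, g) \mapsto g$ identifies the quotient isometrically with $L_2(\mu)$, since the quotient norm of $g$ is attained at the admissible representative $f = \Omega(g)$, giving value $\|g\|_{L_2(\mu)}$.
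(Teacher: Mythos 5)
The paper does not actually prove Theorem~\ref{thm:TwistedFromCentralizer}: it states it as a summary and cites Section~4 of Kalton's memoir on centralizers, so there is no paper-internal proof to compare against. On its own terms, your sketch is mostly sound: the derivation of quasi-linearity via $h = |g_1|+|g_2|$, $u_i = g_i/h$ is exactly Kalton's trick (and even gives $\|u_1+u_2\|_\infty \leq 1$, so the constant is $3C$, not worse); the algebraic decomposition for closure under addition and the quasi-triangle inequality is correct; and you rightly flag local convexity (equivalence to an honest norm) as the hard point and defer it to Kalton's theorem, which is what the paper does too. The identification of the subspace $\{(f,0)\}$ and of the quotient with $L_2(\mu)$ is also fine.

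The genuine gap is the direct completeness argument. You claim that if $(f_n,g_n)$ is $\|\cdot\|_\Omega$-Cauchy then $f_n - \Omega(g_n)$ is Cauchy in $L_2(\mu)$. This is false: the quasinorm controls $\bigl\|(f_n-f_m) - \Omega(g_n-g_m)\bigr\|_{L_2}$, and the discrepancy between this and $\bigl\|(f_n-\Omega(g_n)) - (f_m-\Omega(g_m))\bigr\|_{L_2}$ is the quasi-linearity error $\bigl\|\Omega(g_n)-\Omega(g_m)-\Omega(g_n-g_m)\bigr\|_{L_2}$, which is only bounded by $C'(\|g_m\|_{L_2}+\|g_n-g_m\|_{L_2})$ and does not tend to $0$ when the limit $g$ is nonzero. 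Equivalently: the bijection $(f,g)\mapsto (f-\Omega(g),g)$ onto $L_2\oplus_1 L_2$ is an isometry of the quasinorm but is \emph{not linear}, so it does not transport completeness. (One can cook up explicit Cauchy sequences, e.g.\ $g_n = g + e_n$ with $e_n\to 0$ and $f_n = f_1 + \Omega(e_n)$, whose limit is $(f_1,g)$ but for which $f_n-\Omega(g_n)$ oscillates by an amount comparable to the quasi-linearity defect at $g$.) The standard repair is the three-space property for completeness of quasi-normed spaces: $Y=\{(f,0):f\in L_2(\mu)\}$ is closed and isometric to $L_2(\mu)$, the quotient $Z/Y$ is isometric to $L_2(\mu)$ via $(f,g)+Y\mapsto g$, and a short exact sequence with complete kernel and complete quotient has complete middle term. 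With that substitution your outline is correct.
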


We now recall the construction of the spaces $\ell_2(\phi)$ of Kalton and Peck \cite{KaltonPeck}, following the approach of \cite{KaltonCentralizersBook} based on centralizers. Here, $\phi$ stands for any Lipschitz map $\R \to \R$. We will apply the theory of centralizers presented above in the special case when $E=\N$ and $\mu$ is the counting measure (although this construction is valid in more generality, see \cite[Section 3.12]{CabelloCastillo}). Let $\Omega_\phi \colon \ell_2 \to \mathbb{K}^\N$ be the unique homogeneous map such that, for every $x \in S_{\ell_2}$ and $k \in \N$, one has: $$\Omega_\phi(x)(k) = \left\{
\begin{array}{ll}
x(k)\phi(-\log|x(k)|) & \text{ if }x(k) \neq 0,\\
0 & \text{ if }x(k) = 0.
\end{array}
\right.$$
It follows from arguments from \cite{KaltonCentralizersBook} (see the computation at the top of page 14 and the proof of Proposition 4.1) that $\Omega_\phi$ is a centralizer on $\ell_2$. The associated twisted Hilbert space $\ell_2 \oplus_{\Omega_\phi} \ell_2$ is denoted by $\ell_2(\phi)$. For $\phi(t) = t$, this space is now known as the \textit{Kalton--Peck space} and often denoted by $Z_2$. It is easily seen that if $\phi$ is bounded on $\R^+$, then $\ell_2(\phi)$ is equal to the direct sum $\ell_2 \oplus \ell_2$ (up to equivalence of norms), so in particular, $\ell_2(\phi)$ is Hilbertian. The following theorem, which is the main result of this section, pictures a quite opposite situation in the case when $\phi$ is unbounded on $\R_+$.

\begin{thm}\label{ergodicityTwisted}
    Let $\phi \colon \R \to \R$ be a Lipschitz function. If $\phi$ is unbounded on $\R_+$, then the space $\ell_2(\phi)$ contains a non-Hilbertian asymptotically Hilbertian subspace. In particular, spaces $\ell_2(\phi)$ are either Hilbertian, or ergodic.
\end{thm}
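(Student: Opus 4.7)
The bounded case is already handled by the remark preceding the theorem, so the focus is on $\phi$ unbounded on $\R_+$. The strategy is to exhibit a non-Hilbertian Orlicz subspace $h_M \subset \ell_2(\phi)$ and then invoke Theorem~\ref{thm:main}.

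The plan is to take pairwise disjoint finite sets $A_k \subset \N$ with $|A_k| = n_k$ and set $g_k = \mathbf{1}_{A_k}/\sqrt{n_k}$, a flat unit vector in $\ell_2$; then $\Omega_\phi(g_k) = \phi(\tfrac{1}{2}\log n_k)\,g_k$. Define the block vectors $w_k := (\Omega_\phi(g_k), g_k) \in \ell_2(\phi)$, which satisfy $\|w_k\|_\Omega = 1$. Writing $s_j := \tfrac{1}{2}\log n_j$, a direct pointwise calculation on each $A_j$ (using $\|\sum_k a_k g_k\|_{\ell_2} = \|a\|_{\ell_2}$ by disjointness) yields, for any finitely supported scalar sequence $a = (a_k)$,
\begin{equation*}
\Big\|\sum_k a_k w_k\Big\|_\Omega = \|a\|_{\ell_2} + \Big(\sum_j |a_j|^2 \bigl[\phi(s_j) - \phi(s_j + \log(\|a\|_{\ell_2}/|a_j|))\bigr]^2\Big)^{1/2}.
\end{equation*}

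The next step is to choose the $n_k$'s so that $(s_k)$ traces a sequence along which $|\phi|$ tends to infinity, which is possible since $\phi$ is unbounded on $\R_+$. The Lipschitz property of $\phi$ bounds the bracket above by a constant multiple of $|\log(\|a\|_{\ell_2}/|a_j|)|$, and a careful choice of the $n_k$'s yields a comparable lower bound; this identifies $\|\sum_k a_k w_k\|_\Omega$ with an Orlicz norm $\|a\|_M$ up to equivalence, where $M$ satisfies $M(t)/t^2 \to \infty$ as $t \to 0^+$, so $h_M$ is non-Hilbertian. Since $h_M$ embeds in the near-Hilbert space $\ell_2(\phi)$, it is itself near-Hilbert, forcing $\alpha_M = \beta_M = 2$ by Theorem~\ref{thm:containmentofellp}. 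Theorem~\ref{thm:main} then produces a non-Hilbertian $1^+$-asymptotically Hilbertian subspace $Y \subset h_M \subset \ell_2(\phi)$, establishing the first assertion. The ``in particular'' part follows immediately: Theorem~\ref{thm:Anisca} gives that $Y$ is ergodic, and ergodicity passes to ambient spaces.

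The main obstacle is the Orlicz identification, specifically the matching lower bound on the bracket. For the Kalton--Peck case $\phi(t) = t$, the identity $\phi(s) - \phi(s+t) = -t$ makes the computation explicit and identifies $M(t) = t^2 \log^2(1/t)$, a classical non-Hilbertian near-Hilbert Orlicz function. For general Lipschitz $\phi$ unbounded on $\R_+$, the task is to select $n_k$ along a subsequence of scales where $\phi$ behaves monotonically enough that the bracket is genuinely comparable in magnitude to $|\log(\|a\|_{\ell_2}/|a_j|)|$.
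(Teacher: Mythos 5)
Your block-vector computation is correct, and your norm formula for $\bigl\|\sum_k a_k w_k\bigr\|_\Omega$ in terms of the bracket $\phi(s_j) - \phi(s_j + \log(\|a\|_{\ell_2}/|a_j|))$ matches the paper's own identity (which appears as the quantity (ConvergCond1) there). However, the step you flag as the ``main obstacle'' --- making the bracket genuinely comparable to $|\log(\|a\|_{\ell_2}/|a_j|)|$ uniformly in $j$ by a clever choice of $n_k$ --- is a real gap, and I do not see how to fill it in the generality you need. The bracket depends on the base point $s_j$ as well as on the increment $t = \log(\|a\|_{\ell_2}/|a_j|)$; to turn this into an Orlicz norm you need a single function of $t$ alone, uniformly over $j$. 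For general Lipschitz $\phi$ unbounded on $\R_+$ this is simply not available: consider $\phi$ of staircase type, constant on intervals of rapidly growing length and jumping by $1$ in short transitions. Then $\phi$ is Lipschitz and unbounded, but for any choice of scales $s_j$, there are increments $t$ of arbitrary size with $\phi(s_j)-\phi(s_j+t)$ far smaller than $t$ (and even equal to $0$). No choice of $n_k$ makes the resulting $M$ satisfy $M(u)/u^2 \to \infty$, so your construction would fail to detect non-Hilbertianity for such $\phi$.

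The paper sidesteps this entirely by choosing the simplest possible vectors: it takes $w_n = (\Omega_\phi(e_n), e_n)$, i.e.\ $n_k = 1$ for all $k$ in your notation, so $s_j = 0$ identically. The bracket becomes $\phi(0) - \phi(-\log(|t_n|/\sigma))$, a function of a single variable, and the span of $(w_n)$ is then shown (Proposition \ref{TwistedContainsOrlicz}) to be isomorphic to $h_M$ with $M(t) \asymp t^2\bigl(1 + \phi(-\log t)^2\bigr)$. The unboundedness of $\phi$ on $\R_+$ immediately forces $M(t)/t^2 \to \infty$ as $t \to 0^+$, hence $h_M$ non-Hilbertian, without needing any lower bound on increments of $\phi$; and the Lipschitz property alone gives $\alpha_M = \beta_M = 2$ (your route via near-Hilbertness of twisted Hilbert spaces would also work). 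The only extra work the paper does is Lemma \ref{SuitableOrliczFunction}, which mollifies $\phi$ by convolution to produce a genuinely convex $M$ equivalent to $t^2(1 + \phi(-\log t)^2)$; your write-up does not address convexity of the putative $M$ either, but this is secondary to the main issue. In short: drop the growing supports, take $n_k = 1$, and the obstacle disappears.
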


Before proceeding to the proof of Theorem \ref{ergodicityTwisted}, let us make a few comments. A particular case of Question \ref{quest:ErgodicityTwisted} this theorem rises is the following.

\begin{question}\label{quest:ErgCentralizer}
Are all non-Hilbertian spaces of the form $L_2(\mu) \oplus_\Omega L_2(\mu)$, where $(E, \mu)$ is a measure space and $\Omega$ a centralizer on $L_2(\mu)$, ergodic?
\end{question}

The class of all complex spaces of the form $L_2(\mu) \oplus_\Omega L_2(\mu)$ as above, where 
the centralizer $\Omega$ is additionally supposed to be real (i.e., $\Omega(f)$ is real-valued whenever $f$ is), is of particular interest because, by a result of Kalton \cite[Theorem 7.6]{KaltonTwistedInterpolation}, 
this class coincides 
with the class of all twisted Hilbert spaces that can be generated by complex interpolation between two K\"othe function spaces.  
Let us explain this result in more details. \textit{K\"othe function spaces} on a measure space $(E, \mu)$ form a pretty general class of Banach spaces of measurable functions on $(E, \mu)$, including for instance all spaces $L_p(\mu)$ and more generally all Orlicz function spaces $L_M(\mu)$ (so, in particular, all Orlicz sequence spaces $\ell_M$ when $(E, \mu)$ is $\N$ endowed with the counting measure); see \cite[Section 3]{KaltonTwistedInterpolation} for a precise definition. Given two complex K\"othe function spaces $X_0$ and $X_1$ on the same measure space $(E, \mu)$, \textit{complex interpolation} is a construction that associates to each $\theta \in [0, 1]$ a K\"othe function space $[X_0, X_1]_\theta$ on $(E, \mu)$ that can be seen as a $\theta$-weighted average between $X_0$ and $X_1$ (for instance, $[L_p(\mu), L_q(\mu)]_\theta=L_r(\mu)$, where $1/r=(1-\theta)/p + \theta/q$); we refer the reader to \cite{BerghLofstrom} for a general introduction to this topic. The map $\theta \mapsto [X_0, X_1]_\theta$ is called an \textit{interpolation scale}. As observed by Kalton \cite{KaltonTwistedInterpolation} (formalizing earlier ideas by Rochberg and Weiss \cite{RochbergWeiss}), to each complex interpolation scale $[X_0, X_1]$ of K\"othe function spaces on a measure space $(E, \mu)$ satisfying $[X_0, X_1]_{1/2} = L_2(\mu)$, one can associate a twisted Hilbert space $[X_0, X_1]_{1/2}'$ which is in some sense the differential of the interpolation scale at $\theta = 1/2$. This space can be shown to have the form $L_2(\mu) \oplus_\Omega L_2(\mu)$, where $\Omega$ is a real centralizer on $L_2(\mu)$. The forementioned result by Kalton asserts that conversely, every twisted sum of the form $L_2(\mu) \oplus_\Omega L_2(\mu)$, where $\Omega$ is a real centralizer on the complex space $L_2(\mu)$, can be generated by complex interpolation between two K\"othe function spaces on $(E, \mu)$. (Kalton's result actually applies to more general twisted sums, but here we chose to restrict ourselves to twisted Hilbert spaces for simplicity of exposition.)

The case of interpolation between Orlicz sequence spaces has been studied in more detail by Castillo, Ferenczi and Gonzalez in \cite{CastilloFerencziGonzalez}. Consider two Orlicz functions $M_0$ and $M_1$, both satisfying the $\Delta_2$-condition at $0$ and such that both $M_0(t)/t$ and $M_1(t)/t$ tend to $0$ as $t$ tends to $0$. Moreover assume that $M_0^{-1}(t)M_1^{-1}(t) = t$ for all $t \geqslant 0$. It follows from \cite[Proposition 3.8]{CastilloFerencziGonzalez} that $[\ell_{M_0}, \ell_{M_1}]_{1/2} = \ell_2$. The twisted Hilbert space $[\ell_{M_0}, \ell_{M_1}]_{1/2}'$ is computed in \cite[Proposition 3.9]{CastilloFerencziGonzalez}; it is $\ell_2 \oplus_\Omega \ell_2$, where the centralizer $\Omega$ is given by:
$$\Omega(x)(k) = \left\{
\begin{array}{ll}
\displaystyle 2x(k)\log\frac{M_1^{-1}(|x(k)|^2)}{|x(k)|} & \text{ if }x(k) \neq 0,\\
0 & \text{ if }x(k) = 0,
\end{array}
\right.$$
for all $x \in S_{\ell_2}$. It is easily seen that $\Omega = \Omega_\phi$, where $\phi(t) = 2t + 2\log M_1^{-1}(e^{-2t})$. To see that $\phi$ is Lipschitz, it is enough to see that $\psi(t) = \log M_1^{-1}(e^{t})$ is Lipschitz; this follows from the fact that $\psi$ is nondecreasing and for every $s \in \R$ and $t \geqslant 0$, one has:
$$\psi(s+t) = \log M_1^{-1}(e^te^s) \leqslant \log \left(e^tM_1^{-1}(e^s)\right) = t + \psi(s),$$
the inequality coming from the concavity of $M_1^{-1}$. Hence $[\ell_{M_0}, \ell_{M_1}]_{1/2}'$ is isomorphic to $\ell_2(\phi)$, so we obtain the following corollary of Theorem \ref{ergodicityTwisted}.

\begin{cor}
    Every twisted Hilbert space generated by interpolation between two Orlicz sequences spaces as presented above is either Hilbertian or ergodic.
\end{cor}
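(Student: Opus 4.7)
The second assertion of the theorem follows from the first by Anisca's Theorem~\ref{thm:Anisca}, so the task is to construct, when $\phi$ is unbounded on $\R_+$, a non-Hilbertian asymptotically Hilbertian subspace of $\ell_2(\phi)$. My plan is to produce such a subspace by a block-vector construction and then reduce its analysis to an instance of the Musielak--Orlicz machinery developed in Section~\ref{sec:ProofMain}. Concretely, I consider the closed linear span $V$ of block vectors $v_n = (0, g_n) \in \ell_2(\phi)$, where $(A_n)_n$ is a sequence of pairwise disjoint finite subsets of $\N$ with $|A_n| = N_n$, and $g_n = N_n^{-1/2}\mathbf{1}_{A_n}$ is the associated flat unit vector in $\ell_2$. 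The explicit formula for $\Omega_\phi$ and its homogeneity yield, for a finitely supported scalar sequence $(\alpha_n)_n$,
\[
\Big\|\sum_n \alpha_n v_n\Big\|_\Omega = \|\alpha\|_2 + \sqrt{\sum_n |\alpha_n|^2\, \phi\!\left(\tfrac{1}{2}\log N_n + \log(\|\alpha\|_2/|\alpha_n|)\right)^2}.
\]

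From this formula I extract the two needed properties. For $V$ to be non-Hilbertian, I choose $(N_n)_n$ so that $|\phi(\tfrac{1}{2}\log N_n)| \to \infty$, which is possible since $\phi$ is unbounded on $\R_+$; then $\|v_n\|_\Omega = 1+|\phi(\tfrac{1}{2}\log N_n)| \to \infty$, so $(v_n)_n$ cannot be equivalent to the $\ell_2$-basis. For the asymptotic Hilbertianness of $V$, I would relate the above quasi-norm, on finite-dimensional tails of $V$, to a Musielak--Orlicz norm $\|\cdot\|_{(M_n)_n}$ with $M_n(t) \asymp t^2\bigl(1+\phi(\tfrac{1}{2}\log N_n - \log t)^2\bigr)$ for $t \in (0,1]$. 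The Lipschitz estimate $|\phi(x+r)-\phi(x)|\leq L|r|$ shows that when $|\phi(\tfrac{1}{2}\log N_n)|$ is large, the ratio $M_n(\lambda t)/M_n(\lambda)$ approaches $t^2$ with error $O(L/|\phi(\tfrac{1}{2}\log N_n)|)$. Choosing the sequence $(N_n)_n$ with each scale repeated sufficiently many times, as in the proof of Theorem~\ref{thm:main}, ensures the hypotheses of Lemma~\ref{lem:AsympHilb} are satisfied, whence $V$ is $1^+$-asymptotically Hilbertian.

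The main technical obstacle is the precise reduction of the expression for $\|\cdot\|_\Omega$ on $V$ to a bona fide Musielak--Orlicz norm: the factor $\log(\|\alpha\|_2/|\alpha_n|)$ couples all the coordinates together, whereas a Musielak--Orlicz norm is strictly coordinate-wise. The Lipschitz property of $\phi$ shows that this coupling is only a small additive perturbation of the \emph{leading value} $\phi(\tfrac{1}{2}\log N_n)$ once the latter is large; carrying out this perturbation estimate uniformly on $n$-dimensional subspaces is the crux of the argument, and should follow patterns already present in Lemmas~\ref{lem:GenCaseStep1} and~\ref{lem:GenCaseStep2}.
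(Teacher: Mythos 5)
Your proposal does not actually address the proof of this corollary, which in the paper is a one-line reduction: the preceding paragraph shows, using \cite[Proposition 3.9]{CastilloFerencziGonzalez} together with the concavity of $M_1^{-1}$, that $[\ell_{M_0}, \ell_{M_1}]_{1/2}'$ is isomorphic to $\ell_2(\phi)$ for a specific Lipschitz $\phi$, after which the corollary is immediate from Theorem~\ref{ergodicityTwisted}. Instead you have given an argument for Theorem~\ref{ergodicityTwisted} itself, so let me assess it on those terms.

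Your block-vector construction $v_n = (0, g_n)$ and the displayed quasinorm identity are correct, and this route is genuinely different from the paper's, which takes the canonical lifts $w_n = (\Omega_\phi(e_n), e_n)$, proves $(w_n)$ is equivalent to the canonical basis of a single Orlicz space $h_M$ with $\alpha_M=\beta_M=2$ (Proposition~\ref{TwistedContainsOrlicz}), and then feeds $h_M$ into Theorem~\ref{thm:main}. However, your key step---that for $|\phi(\tfrac12\log N_n)|$ large, the ratio $M_n(\lambda t)/M_n(\lambda)$ tends to $t^2$ uniformly in $\lambda\in(0,1]$, $\nu\le t\le 1$, with error $O(L/|\phi(\tfrac12\log N_n)|)$---is false for a general Lipschitz $\phi$ that is unbounded on $\R_+$. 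The point is that the Lipschitz bound controls $|\phi(\tfrac12\log N_n-\log(\lambda t))-\phi(\tfrac12\log N_n-\log\lambda)|$, but it does not prevent $\phi(\tfrac12\log N_n-\log\lambda)$ itself from being small: $\phi$ need not be monotone on $\R_+$, and a Lipschitz $\phi$ that is unbounded on $\R_+$ can also return to $0$ along a sequence going to $+\infty$ (a ``tent'' function with growing peaks, for example). For such a $\phi$, choosing $N_n$ with $|\phi(\tfrac12\log N_n)|\to\infty$ does not make $1+\phi(\tfrac12\log N_n-\log\lambda)^2$ large uniformly over $\lambda\in(0,1]$, so the hypothesis of Lemma~\ref{lem:AsympHilb} simply fails for your $M_n$; one only gets a two-sided bound $c\,t^2\le M_n(\lambda t)/M_n(\lambda)\le C\,t^2$ with constants that do not improve as $n\to\infty$. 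Repeating scales $s(k)$ times, as you suggest, is needed for non-Hilbertianity, but does nothing to repair this.

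This is precisely the difficulty Lemmas~\ref{lem:GenCaseStep1} and~\ref{lem:GenCaseStep2} are designed to overcome: rather than evaluating $M$ (equivalently, $\phi$) at a single scale, they build $N(t)=\sum_{r=0}^{2R}\kappa^{|r-R|}\tau^{-2r}M(\tau^r t)$, a weighted geometric average over many scales, and this Ces\`aro-type smoothing is what produces the $(1+\eta)^{\pm1}\tau^2$ bounds uniformly in $\lambda$. In your language, the correct block vectors in $\ell_2(\phi)$ are not flat constants $N_n^{-1/2}\mathbf{1}_{A_n}$ but blocks whose coordinate profile encodes that geometric averaging; these are exactly the images, under the isomorphism of Proposition~\ref{TwistedContainsOrlicz}, of the block vectors constructed inside $h_M$ in the proof of Theorem~\ref{thm:main}. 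Without that averaging step your argument only covers monotone-like $\phi$ (such as the Kalton--Peck $\phi(t)=t$), not the general Lipschitz case, and in particular not the general interpolation situation of the corollary.

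Finally, the non-Hilbertianity claim is correct but under-argued: $\|v_n\|_\Omega\to\infty$ alone does not rule out that $\overline{\Span}(v_n)$ is isomorphic to $\ell_2$. One should note that $(v_n)$ is $1$-unconditional (immediate from your formula), so if the span were Hilbertian then the normalized sequence would be a seminormalized unconditional basic sequence in $\ell_2$, hence equivalent to the orthonormal basis by the type/cotype $2$ argument (or the fact cited in the paper that all unconditional bases of $\ell_2$ are equivalent), contradicting the unboundedness of $\|v_n\|_\Omega$.
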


We now turn to the proof of Theorem \ref{ergodicityTwisted}. For this we will find an isomorphic copy of an Orlicz sequence space in the space $\ell_2(\phi)$. Note that this has already been done by Kalton and Peck for some classes of functions $\phi$, see \cite[Lemma 5.3]{KaltonPeck}. Below we generalize their argument to all Lipschitz functions $\phi$.

\begin{prop}\label{TwistedContainsOrlicz}
    Let $\phi \colon \R \to \R$ be a Lipschitz function. For every $n \in \N$, let $w_n \coloneq (\Omega_\phi(e_n), e_n) \in \ell_2(\phi)$, where $e_n$ denotes the $n$-th basic vector in $\ell_2$. Then $(w_n)_{n\in \N}$ is equivalent to the canonical basis of an Orlicz space $h_M$ with $\alpha_M = \beta_M = 2$. Moreover, if $\phi$ is unbounded on $\R_+$, then $h_M$ is non-Hilbertian.
\end{prop}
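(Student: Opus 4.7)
The plan is to compute $\|\sum_n a_n w_n\|$ explicitly in $\ell_2(\phi)$ and realize it, up to equivalence, as the norm of an Orlicz sequence space. Since $\|e_n\|_{\ell_2} = 1$ and $|e_n(n)| = 1$, the formula defining $\Omega_\phi$ yields $\Omega_\phi(e_n) = \phi(0)e_n$, so $w_n = (\phi(0)e_n, e_n)$ in the pair notation of Theorem~\ref{thm:TwistedFromCentralizer}; by linearity $\sum_n a_n w_n = (\phi(0)x, x)$ for $x := \sum_n a_n e_n \in \ell_2$. The homogeneity of $\Omega_\phi$ gives $\Omega_\phi(x)(k) = a_k\phi(\log(\rho/|a_k|))$ for $a_k \neq 0$, where $\rho = \|x\|_{\ell_2}$. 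Setting $\Psi(s) := \phi(s) - \phi(0)$, the quasinorm from Theorem~\ref{thm:TwistedFromCentralizer} becomes
$$\Bigl\|\sum_n a_n w_n\Bigr\|_\Omega = \Bigl(\sum_k |a_k|^2\Psi\bigl(\log(\rho/|a_k|)\bigr)^2\Bigr)^{1/2} + \rho.$$

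The natural candidate Orlicz function is $M(t) := t^2(1+\Psi(-\log t)^2)$ for small $t>0$, which I would replace if necessary by a convex Orlicz function agreeing with it near $0$ up to a multiplicative constant (justified by Proposition~\ref{Prop:IsomOrlicz}, so that the resulting space is unaffected). To show $(w_n)$ is equivalent to the canonical basis of $h_M$, observe that $\rho_0 := \|(a_n)\|_M$ is characterized by $\rho_0^2 - \rho^2 = \sum_k |a_k|^2\Psi(\log(\rho_0/|a_k|))^2$. Since $\phi$, hence $\Psi$, is $L$-Lipschitz, we have $|\Psi(\log(\rho/|a_k|)) - \Psi(\log(\rho_0/|a_k|))| \leq L\log(\rho_0/\rho)$ (with $\rho_0 \geq \rho$ because $M(t) \geq t^2$). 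This allows the two sums to be compared up to an additive error $O(L^2\rho^2\log^2(\rho_0/\rho))$, and using $(\log r)/r \leq 1/e$ one absorbs this error in a case split on the ratio $\rho_0/\rho$ (bounded vs.\ large), yielding $\|\sum_n a_n w_n\|_\Omega \asymp \rho_0$.

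The equalities $\alpha_M = \beta_M = 2$ follow from direct estimates: with $u = -\log\lambda$, $v = -\log t \geq 0$, the ratio $M(\lambda t)/(M(\lambda) t^2)$ equals $(1+\Psi(u+v)^2)/(1+\Psi(u)^2)$, and the Lipschitz bounds $\Psi(u+v)^2 \leq 2\Psi(u)^2 + 2L^2v^2$ and $\Psi(u+v)^2 \geq \tfrac12\Psi(u)^2 - L^2v^2$ control this ratio by expressions polynomial in $v$, which $t^{q-2} = e^{-(q-2)v}$ absorbs for any $q \neq 2$. For the final assertion, if $\phi$ is unbounded on $\R_+$ then so is $\Psi$, so $M(t)/t^2 = 1+\Psi(-\log t)^2$ is unbounded as $t \to 0^+$; Proposition~\ref{Prop:IsomOrlicz} applied with $N(t)=t^2$ then precludes $h_M \simeq \ell_2$. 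I expect the main technical obstacle to be the norm-equivalence step: the asymmetry between the $\ell_2$-parameter $\rho$ and the Orlicz-parameter $\rho_0$ forces a careful regime split to absorb the Lipschitz error $L\log(\rho_0/\rho)$, and a minor side issue is verifying that $M$ can be replaced by a genuine convex Orlicz function near $0$ without affecting any of the conclusions.
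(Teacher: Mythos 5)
Your overall strategy — compute the quasinorm of $\sum a_n w_n$ explicitly, identify the candidate Orlicz function $t^2\bigl(1+\Psi(-\log t)^2\bigr)$, compare the resulting conditions using the Lipschitz property of $\phi$ — is the same as the paper's. But two points deserve attention.

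The more serious one is the step you dismiss as ``a minor side issue.'' The function $t^2\bigl(1+\Psi(-\log t)^2\bigr)$ is in general \emph{not} an Orlicz function, and not even nondecreasing near $0$: if $L$ is the Lipschitz constant of $\phi$, differentiating gives $M'(t) = 2t\bigl(1 + \Psi(-\log t)^2 - \Psi(-\log t)\Psi'(-\log t)\bigr)$, which can be negative when $L > 2$ (e.g.\ along intervals where $\Psi$ oscillates sharply). It is not a priori clear that such a function is equivalent near $0$ to a genuine convex Orlicz function, and you give no argument for this. The paper devotes Lemma~\ref{SuitableOrliczFunction} to exactly this: it smooths $\phi$ by convolution with a bump function to obtain a $\mathcal{C}^\infty$ function $\psi$ with $\psi - \phi$ bounded and all derivatives bounded, and then shows that $M(t) = t^2\bigl(K + \psi(-\log t)^2\bigr)$ has nonnegative second derivative once the constant $K$ is taken large enough. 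The convexity of $M$ is thus a genuine lemma, not a formality, and your proof as written has a real gap here.

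The other point is a genuine but legitimate difference of method in the basis-equivalence step. You aim at a quantitative two-sided estimate $\Vert\sum a_n w_n\Vert_\Omega \asymp \Vert(a_n)\Vert_M$, comparing the $\rho$-normalized and $\rho_0$-normalized sums via the Lipschitz bound $\bigl|\Psi(\log(\rho/|a_k|)) - \Psi(\log(\rho_0/|a_k|))\bigr| \leq L\log(\rho_0/\rho)$ and a case split on the ratio $\rho_0/\rho$; this works, but needs care because the additive error $L\rho\log(\rho_0/\rho)$ must be absorbed multiplicatively, which forces the threshold in the case split to depend on $L$. The paper instead argues qualitatively: it shows that $\sum t_n w_n$ converges iff the sequence $t_n\bigl[\phi(-\log(|t_n|/\sigma))-\phi(0)\bigr]$ is in $\ell_2$, observes that this differs from $t_n\phi(-\log|t_n|)$ by a sequence of $\ell_2$-norm at most $\sigma\bigl(|\phi(0)|+L|\log\sigma|\bigr)$ (a \emph{fixed} finite number once $(t_n)\in\ell_2$), and concludes the two convergence conditions are equivalent; basis equivalence then comes for free from the closed graph theorem together with bounded completeness of $(w_n)$. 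The paper's route avoids your case split entirely. Both work; yours is more explicit, the paper's is cleaner. Finally, at the very end, passing from ``the canonical bases of $h_M$ and $\ell_2$ are inequivalent'' to ``$h_M$ is non-Hilbertian'' also requires invoking that the canonical basis of $h_M$ is unconditional and that all unconditional bases of $\ell_2$ are equivalent; you should not skip this.
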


Theorem \ref{ergodicityTwisted} will be an immediate consequence of Proposition \ref{TwistedContainsOrlicz} and Theorems \ref{thm:main} and \ref{thm:Anisca}. Below, we prove Proposition \ref{TwistedContainsOrlicz}. We start with a preliminary lemma.

\begin{lem} \label{SuitableOrliczFunction}
    Let $\phi \colon \R \to \R$ be a Lipschitz function. Then there exists an Orlicz function $M$ and constants $c, C > 0$ such that for every $t > 0$,
    \begin{equation}\label{IneqLipschitzOrlicz}cM(t) \leqslant t^2(1+ \phi(-\log t)^2) \leqslant CM(t).\end{equation}
\end{lem}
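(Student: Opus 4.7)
My plan is to construct $M$ explicitly as the antiderivative of a carefully chosen nondecreasing function $p$, where the integrand is designed to be within a constant factor of $f(t)/t$, with $f(t) = t^2(1+\phi(-\log t)^2)$.

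Let $L$ be the Lipschitz constant of $\phi$, and set $C = \max(1, L^2)$. Define, for $s > 0$,
$$p(s) = 2s\bigl(C + \phi(-\log s)^2\bigr),$$
extended by $p(0) = 0$ (the limit is $0$ as $s \to 0^+$ since $\phi(-\log s)^2 = O(\log^2 s)$ by the Lipschitz condition). Set $M(t) = \int_0^t p(s)\,ds$. I will verify in four steps that $M$ is an Orlicz function equivalent to $f$.

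Step 1: $p$ is nondecreasing on $[0,\infty)$. Since $\phi$ is Lipschitz, $p$ is locally Lipschitz on $(0, \infty)$, hence absolutely continuous on compact subintervals, and it suffices to show $p' \geq 0$ a.e. At any point of differentiability of $\phi$, the chain rule gives
$$p'(s) = 2C + 2\phi(-\log s)^2 - 4\phi(-\log s)\phi'(-\log s).$$
Writing $a = \phi(-\log s)$ and using $|\phi'| \leq L$, one has $C + a^2 - 2a\phi'(-\log s) \geq C + a^2 - 2L|a| = (|a|-L)^2 + (C-L^2) \geq 0$ because $C \geq L^2$. Hence $p' \geq 0$ a.e., so $p$ is nondecreasing. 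Step 2: $M$ is an Orlicz function. Monotonicity of $p$ gives convexity of $M$, while $M(0) = 0$ and $M(t) \to \infty$ as $t \to \infty$ are immediate from $p(s) \geq 2Cs$.

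Step 3: Upper bound. Monotonicity of $p$ gives $M(t) \leq t\,p(t) = 2t^2(C + \phi(-\log t)^2)$, and since $C \geq 1$, this is at most $2C\cdot t^2(1+\phi(-\log t)^2) = 2C f(t)$. Step 4: Lower bound. Again from monotonicity, $M(t) \geq (t/2) p(t/2) = (t^2/2)\bigl(C + \phi(-\log(t/2))^2\bigr)$. A short case analysis using the Lipschitz property yields a constant $K$, depending only on $L$, such that $1 + \phi(s+\log 2)^2 \geq (1/K)(1+\phi(s)^2)$ for all $s \in \R$ (split according to whether $|\phi(s)| \leq 2L\log 2$ or not; in the first case both sides are bounded, in the second $|\phi(s+\log 2)| \geq |\phi(s)|/2$). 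Applying this with $s = -\log t$, one obtains $M(t) \geq (t^2/(2K))(1+\phi(-\log t)^2) = f(t)/(2K)$.

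The only nontrivial choice is the additive constant $C \geq L^2$: smaller values would fail to dominate the cross term $-2\phi\phi'$ in the derivative, but since $C \geq 1$ the perturbation from $1+\phi^2$ to $C+\phi^2$ is absorbed into the multiplicative constants. I do not expect any serious obstacle: the computation in Step 1 is the key identity, and the other steps are routine.
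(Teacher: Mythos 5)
Your proof is correct, and it takes a genuinely different route from the paper. The paper mollifies $\phi$ by convolving with a smooth bump to obtain $\psi$ with $\psi - \phi$ bounded and all derivatives of $\psi$ bounded, then sets $M(t) = t^2(K + \psi(-\log t)^2)$ and verifies $M'' \geq 0$ directly; that computation produces terms in $\psi'$ and $\psi''$, which is precisely why the smoothing is needed. You sidestep mollification entirely by defining $M$ as the antiderivative of $p(s) = 2s(C_0 + \phi(-\log s)^2)$ with $C_0 = \max(1, L^2)$: convexity of $M$ then requires only that $p$ be nondecreasing, and your a.e.\ computation of $p'$ involves only $\phi'$, which is bounded a.e.\ by Rademacher. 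This trades the paper's smooth second-derivative computation for a Lebesgue a.e.\ argument (locally Lipschitz $\Rightarrow$ locally AC, so $p' \geq 0$ a.e.\ suffices), and the cross term in $p'$ is absorbed by the same device (a large additive constant) as in the paper. Both proofs then need a short comparison step: the paper uses the inequality $(1+u^2)/(1+v^2) \le 2(1+(u-v)^2)$ with $u = \psi(-\log t)$, $v = \phi(-\log t)$, while you compare $\phi(s+\log 2)$ with $\phi(s)$ via a two-case Lipschitz estimate. Your version is arguably more elementary since it avoids convolution altogether; the paper's is perhaps more transparent about where convexity comes from. One small presentational note: you reuse the letter $C$ both for $\max(1,L^2)$ and for the lemma's constant; you may want to rename one of them and record the final constants $c = 1/(2\max(1,L^2))$ and $C = 2K$ explicitly.
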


\begin{proof}
    One can assume that $\phi \geqslant 0$. Let $L$ denote the Lipschitz constant of $\phi$. Let $f \colon \R \to \R_+$ be a $\mathcal{C}^\infty$ map supported on $[0, 1]$ having integral $1$. We let $\psi \coloneq \phi * f$, that is,
    $$\psi(x) \coloneq \int_\R \phi(y) f(x-y) dy.$$

    \begin{claim}\label{claim:convolution}
        \alaligne
        \begin{enumerate}[(1)]
            \item The function $\psi - \phi$ is bounded on $\R$.
            \item The function $\psi$ is $\mathcal{C}^\infty$, and for every $n \geqslant 1$, its $n$-th derivative $\psi^{(n)}$ is bounded on $\R$.
        \end{enumerate}
    \end{claim}

    \begin{proof}
        We prove both points simultaneously. The function $\psi$ is clearly $\mathcal{C^\infty}$ with $\psi^{(n)} = \phi * f^{(n)}$. Let $n \geqslant 0$ and $x \in \R$. One has:
        $$\psi^{(n)}(x) - \phi(x)\int_\R f^{(n)}(x-y) dy = \int_{x-1}^x (\phi(y) - \phi(x))f^{(n)}(x - y) dy,$$
        and $|\phi(y) - \phi(x)| \leqslant L$ for $y \in [x-1, x]$, so
        $$\left|\psi^{(n)}(x) - \phi(x)\int_\R f^{(n)}(x-y) dy\right| \leqslant L\|f^{(n)}\|_\infty.$$
        Observing that $\int_{\R}f(x-y)dy = 1$, we get (1). Observing that $\int_{\R}f^{(n)}(x-y)dy = 0$ for $n \geqslant 1$, we get (2).
    \end{proof}

    We now let $K \geqslant 1$ be a constant that will be fixed later and we let, for $t > 0$:
    $$M(t) \coloneq t^2(K+\psi(-\log t)^2),$$
    and $M(0) = 0$, so that $M$ is continuous on $\R_+$. We also clearly have $\lim_{t \to +\infty} M(t) =  +\infty$. For $t > 0$, letting $x \coloneq - \log t$, a straightforward computation shows that
    $$M''(t) = 2(\psi(x)^2 + \psi(x)(\psi''(x) - 3\psi'(x)) + \psi'(x)^2 + K).$$
    Letting $b \coloneq \inf_\R \psi'' - 3\psi'$, finite by Claim \ref{claim:convolution}, and remembering that $\phi(x) \geqslant 0$, we get:
    $$M''(t) \geqslant 2(\psi(x)^2 + b\psi(x) + K) \geqslant \frac{4K - b^2}{2},$$
    which is nonnegative as soon as $4K \geqslant b^2$. So, for $K \geqslant 1$ chosen large enough, the function $M$ is Orlicz.

    It now remains to show that Inequality (\ref{IneqLipschitzOrlicz}) holds for some $c, C > 0$. First, we note that
    \begin{equation} \label{IneqAbstract}
    \frac{1+u^{2}}{1+v^{2}} \leq 2 \big( 1+(u-v)^{2} \big), \quad u, v \in \mathbb{R}.
    \end{equation}
    Indeed, if we denote $ \Delta = u - v $, then $ 1 + u^{2} = 1 + (v + \Delta)^{2} = 1 + v^{2} + 2v\Delta + \Delta^{2} \leq 1 + v^{2} + v^{2} + \Delta^{2} + \Delta^{2} \leq 2(1 + v^{2})(1 + \Delta^{2}) $.

    Finally, let $ A \coloneq \| \psi - \phi \|_{\infty} $, finite by Claim \ref{claim:convolution}. Using \eqref{IneqAbstract}, we deduce that
    $$ \frac{1}{2(1+A^2)} \leq \frac{1+\psi(-\log t)^2}{1+ \phi(-\log t)^2} \leq \frac{M(t)}{t^2(1+ \phi(-\log t)^2)} \leq K \cdot \frac{1+\psi(-\log t)^2}{1+ \phi(-\log t)^2} \leq K \cdot 2(1+A^2) $$
    for each $ t > 0 $.
\end{proof}

\begin{proof}[Proof of Proposition~\ref{TwistedContainsOrlicz}]
Let $ M $ be an Orlicz function given by Lemma~\ref{SuitableOrliczFunction}. We denote by $ L $ the Lipschitz constant of $ \phi $. Due to \eqref{IneqAbstract}, for $ \lambda, t \in (0, 1] $ and $ 1 < q < \infty $, we have
\begin{align*}
\Big( \frac{M(\lambda t)}{M(\lambda)t^{q}} \Big)^{\pm 1} & \leq \frac{C}{c} \Big( \frac{\lambda^{2}t^{2}(1+\phi(-\log(\lambda t))^{2})}{t^{q}\lambda^{2}(1+\phi(-\log \lambda)^{2})} \Big)^{\pm 1} \\
 & \leq \frac{C}{c} t^{\pm (2-q)} \cdot 2 \big( 1 + (\phi(-\log(\lambda t)) - \phi(-\log \lambda))^{2} \big) \\
 & \leq \frac{C}{c} t^{\pm (2-q)} \cdot 2 \big( 1 + L^{2} (\log t)^{2} \big).
\end{align*}
The number $ K_{s} = \sup_{t \in (0, 1]} \frac{C}{c} t^{s} \cdot 2(1 + L^{2} (\log t)^{2}) $ is finite for every $ s > 0 $. For $ q \in [1, 2) $, we have $ \frac{M(\lambda t)}{M(\lambda)t^{q}} \leq K_{2-q} $, and for $ q \in (2, \infty) $, we have $ \frac{M(\lambda t)}{M(\lambda)t^{q}} \geq 1/K_{q-2} $. Therefore, $ \alpha_{M} = \beta_{M} = 2 $. This in turn implies that $M$ satisfies the $\Delta_2$-condition at $0$; in particular, a vector $x \in \mathbb{K}^\N$ belongs to $h_M$ iff $\sum_{n=1}^\infty M(|x(n)|) < \infty$.

To show that $ w_{1}, w_{2}, \dots $ is equivalent to the canonical basis of $ h_{M} $, we need to check that for any sequence $ t_{n} $, the sum $ \sum_{n=1}^{\infty} t_{n} w_{n} $ converges in $ \ell_{2}(\phi) $ if and only if the sum $ \sum_{n=1}^{\infty} t_{n} e_{n} $ converges in $ h_{M} $. We can assume that $ t_{n} \neq 0 $ for each $ n $. First, we realize that we can also assume that
$$ \sigma \coloneq \Big( \sum_{n=1}^{\infty} |t_{n}|^{2} \Big)^{1/2} < \infty. $$Indeed, this is implied by both conditions:
\begin{itemize}
    \item it is clear if $ \sum_{n=1}^{\infty} t_{n} w_{n} $ converges, since $ \Vert \sum_{n=1}^{l} t_{n} e_{n} \Vert_{\ell_{2}} \leq \Vert \sum_{n=1}^{l} t_{n} w_{n} \Vert_{\Omega_{\phi}} $ for $ l \geq 1 $;
    \item on the other hand, assuming that $ \sum_{n=1}^{\infty} t_{n} e_{n} $ converges in $ h_{M} $, we can write $ \sum_{n=1}^{\infty} |t_{n}|^{2} \leq \sum_{n=1}^{\infty} |t_{n}|^{2} (1+\phi(-\log |t_{n}|)^{2}) \leq C \sum_{n=1}^{\infty} M(|t_{n}|) < \infty $.
\end{itemize}
\noindent Next, we show that the convergence of $ \sum_{n=1}^{\infty} t_{n} w_{n} $ is equivalent to
\begin{equation} \label{ConvergCond1}
\sigma + \Big( \sum_{n=1}^{\infty} |t_{n}|^{2} \big[ \phi(-\log(|t_{n}|/\sigma)) - \phi(0) \big]^{2} \Big)^{1/2} < \infty.
\end{equation}
Although this basically follows from the proof of \cite[Lemma~5.3]{KaltonPeck}, where it is shown in a more general situation (the expression above is the expression $ (**) $ from \cite{KaltonPeck}), for the convenience of the reader, we provide some details. Note first that $ w_{n} = (\phi(0)e_{n}, e_{n}) $ for $ n \in \mathbb{N} $. For $ N \in \mathbb{N} $, we denote
$$ \sigma_{N} \coloneq \Big( \sum_{n=1}^{N} |t_{n}| \Big) ^{1/2}. $$
We have
\begin{align*}
\Big\Vert \sum_{n=1}^{N} t_{n} w_{n} \Big\Vert_{\Omega_{\phi}} & = \Big\Vert \Big( \phi(0) \sum_{n=1}^{N} t_{n} e_{n}, \sum_{n=1}^{N} t_{n} e_{n} \Big) \Big\Vert_{\Omega_{\phi}} \\
 & = \Big\Vert \phi(0) \sum_{n=1}^{N} t_{n} e_{n} - \Omega_{\phi} \Big( \sum_{n=1}^{N} t_{n} e_{n} \Big) \Big\Vert_{\ell_{2}} + \Big\Vert \sum_{n=1}^{N} t_{n} e_{n} \Big\Vert_{\ell_{2}} \\
 & = \Big\Vert \phi(0) \sum_{n=1}^{N} t_{n} e_{n} - \sigma_{N} \Omega_{\phi} \Big( \frac{1}{\sigma_{N}} \sum_{n=1}^{N} t_{n} e_{n} \Big) \Big\Vert_{\ell_{2}} + \sigma_{N} \\
 & = \Big\Vert \phi(0) \sum_{n=1}^{N} t_{n} e_{n} - \sigma_{N} \sum_{n=1}^{N} \frac{t_{n}}{\sigma_{N}} \phi \Big( -\log \Big| \frac{t_{n}}{\sigma_{N}} \Big| \Big) e_{n} \Big\Vert_{\ell_{2}} + \sigma_{N} \\
 & = \Big\Vert \sum_{n=1}^{N} t_{n} \Big[ \phi(0) - \phi \Big( -\log \Big| \frac{t_{n}}{\sigma_{N}} \Big| \Big) \Big] e_{n} \Big\Vert_{\ell_{2}} + \sigma_{N},
\end{align*}
and thus
$$ \bigg| \Big\Vert \sum_{n=1}^{N} t_{n} w_{n} \Big\Vert_{\Omega_{\phi}} - \bigg( \Big\Vert \sum_{n=1}^{N} t_{n} \Big[ \phi(0) - \phi \Big( -\log \Big| \frac{t_{n}}{\sigma} \Big| \Big) \Big] e_{n} \Big\Vert_{\ell_{2}} + \sigma_{N} \bigg) \bigg| $$
$$ \leq \Big\Vert \sum_{n=1}^{N} t_{n} \Big[ \phi \Big( -\log \Big| \frac{t_{n}}{\sigma} \Big| \Big) - \phi \Big( -\log \Big| \frac{t_{n}}{\sigma_{N}} \Big| \Big) \Big] e_{n} \Big\Vert_{\ell_{2}} \leq L\sigma_{N} \log \frac{\sigma}{\sigma_{N}}. $$
As $ L\sigma_{N} \log \frac{\sigma}{\sigma_{N}} \leq L\sigma \log \frac{\sigma}{\sigma_{1}} $, it follows that the sequence of partial sums of $ \sum_{n=1}^{\infty} t_{n} w_{n} $ is bounded if and only if \eqref{ConvergCond1} holds. It remains to recall that $ w_{n} $ is a boundedly complete basic sequence, see \cite{KaltonPeck}.

On the other hand, $ \sum_{n=1}^{\infty} t_{n} e_{n} $ converges in $ h_{M} $ if and only if $ \sum_{n=1}^{\infty} M(|t_{n}|) < \infty $, which is, by \eqref{IneqLipschitzOrlicz}, equivalent to
\begin{equation} \label{ConvergCond2}
\sigma^{2} + \sum_{n=1}^{\infty} |t_{n}|^{2} \phi(-\log|t_{n}|)^{2} < \infty.
\end{equation}
Now, comparing \eqref{ConvergCond1} and \eqref{ConvergCond2}, we see that what we need to check is that the sequence $ t_{n} [\phi(-\log(|t_{n}|/\sigma)) - \phi(0)] $ belongs to $ \ell_{2} $ if and only if the sequence $ t_{n} \phi(-\log|t_{n}|) $ belongs to $ \ell_{2} $. It is sufficient that the difference of these sequences belongs to $ \ell_{2} $. As
\begin{align*}
\big| t_{n} \big[ \phi(-\log(|t_{n}|/\sigma)) - \phi(0) \big] - t_{n} \phi(-\log |t_{n}|) \big| & \leq |t_{n}| \cdot \big( |\phi(0)| + L \cdot \big| -\log(|t_{n}|/\sigma) + \log |t_{n}| \big| \big) \\
 & = |t_{n}| \cdot \big( |\phi(0)| + L \cdot |\log \sigma| \big),
\end{align*}
the difference has $\ell_2$-norm at most $ \sigma \cdot (|\phi(0)| + L \cdot |\log \sigma|) $.

If $ \phi $ is unbounded on $ \mathbb{R}_{+} $, then it follows from \eqref{IneqLipschitzOrlicz} that the function $ M(t)/t^{2} $ is unbounded on $ (0, 1) $, so Proposition~\ref{Prop:IsomOrlicz} shows that the canonical basis of $h_M$ is not equivalent to the canonical basis of $\ell_2$. Since the canonical basis of $h_M$ is unconditional and all unconditional bases of $\ell_2$ are equivalent (see \cite[page 71]{LindenstraussTzafririI}), we deduce that $h_M$ is not Hilbertian.
\end{proof}

\paragraph{Acknowledgments.} The authors would like to thank Wilson Cuellar Carrera and Valentin Ferenczi for enlightening discussions, and the anonymous referee for useful comments.

\bibliographystyle{plain}
\bibliography{main}

  \par
  \bigskip
    \textsc{\footnotesize Univ. Lille, CNRS, UMR 8524 - Laboratoire Paul Painlevé, F-59000 Lille, France}
  
  \textit{E-mail address}: \texttt{nderancour@univ-lille.fr}
  
  \par 
  \bigskip
  \textsc{\footnotesize 
Institute of Mathematics,
Czech Academy of Sciences,
\v{Z}itn\'a 25,
115 67 Praha 1,
Czech Republic}

\textit{E-mail address}: \texttt{kurka.ondrej@seznam.cz}

\end{document}